\newcommand{\F}{\mathbb F}
\DeclareMathOperator{\Tr}{Tr}
\DeclareMathOperator{\im}{im}
\DeclareMathOperator{\Ker}{ker}
\DeclareMathOperator{\rad}{rad}
\begin{document}
\title{On subspaces of Kloosterman zeros and permutations of the form $L_1(x^{-1})+L_2(x)$}
\titlerunning{Subspaces of Kloosterman zeros and permutation polynomials}
\author{Faruk G\"olo\u{g}lu \inst{1} \thanks{Faruk G\"olo\u{g}lu was supported by the GA\v{C}R Grant 18-19087S -301-13/201843}  \and
Lukas K\"olsch\inst{2} \and
Gohar Kyureghyan\inst{2} \and
L\'eo Perrin\inst{3}}
\authorrunning{Faruk G\"olo\u{g}lu et al.}
\institute{Department of Mathematics, Faculty of Mathematics and Physics, Charles University, Prague, Czech Republic \\
\email{farukgologlu@gmail.com}\\
\and
Department of Mathematics,  University of Rostock, Germany\\
\email{\{lukas.koelsch,gohar.kyureghyan\}@uni-rostock.de}\\
\and
Inria, Paris, France \\
\email{leo.perrin@inria.fr}}

\maketitle
\abstract{Permutations of the form $F(x)=L_1(x^{-1})+L_2(x)$  with linear functions $L_1,L_2$ are closely related to several interesting questions regarding CCZ-equivalence and EA-equivalence of the inverse function. In this paper, we show that $F$ cannot be a permutation on binary fields if the kernel of $L_1$ or $L_2$ is  large. A key step of our proof is  an observation on the maximal size of a subspace $V$ of $\F_{2^n}$ that  consists of Kloosterman zeros, i.e. a subspace $V$ such that $K_n(v)=0$ for every $v \in V$ where $K_n(v)$ denotes the Kloosterman sum of $v$.}
\keywords{Inverse function \and permutation polynomials \and Kloosterman sums \and EA-equivalence \and CCZ-equivalence.}

\section{Introduction}

Vectorial Boolean functions play an important  role in the design of symmetric cryptosystems as design choices for S-boxes. The linear and differential properties of vectorial Boolean functions are a measure of resistance against linear \cite{linattacks} and differential \cite{diffattack} attacks. 
\begin{definition}
	A function $F \colon \F_{2^n} \rightarrow \F_{2^n}$ has differential uniformity $d$, if 
	\begin{equation*}
		d=\max_{a \in \F_{2^n}^*,b\in \F_{2^n}} |\{x \colon F(x)+F(x+a)=b\}|.
	\end{equation*}
	A function with differential uniformity $2$ is called almost perfect nonlinear (APN) on $\F_{2^n}$.
\end{definition}
To resist differential attacks, a vectorial Boolean function should have low differential uniformity. As the differential uniformity is always even, the APN functions yield the best resistance against 
differential attacks.

\begin{definition}
	The Walsh transform $W_F:\F_{2^n} \times \F_{2^n} \to \mathbb{Z}$ of a function $F \colon \F_{2^n} \rightarrow \F_{2^n}$ is defined as follows:
	\begin{equation*}
		W_F(a,b)=\sum_{x\in \F_{2^n}} (-1)^{\Tr(aF(x)+bx)}.
	\end{equation*}
	The nonlinearity of $F$ is defined as 
	\begin{equation}
		nl(F) = 2^{n-1}-\frac{1}{2} \max_{a \in \F_{2^n}^*,b\in \F_{2^n}} |W_F(a,b)|.
	\end{equation}
\end{definition}
The higher the nonlinearity of a vectorial Boolean function, the better is its resistance to linear attacks.

There are several operations on the set of Boolean functions under which linear and differential
properties are invariant. They lead to several equivalence concepts for vectorial Boolean functions.
We denote by
\begin{equation*}
G_F=\{(x,F(x)) \colon x \in \F_{2^n}\} \subset \F_{2^n} \times \F_{2^n}
\end{equation*}
the graph of the function $F \colon \F_{2^n}\rightarrow \F_{2^n}$.
 In the next definition and
in the remainder of the paper we use 
the term linear function to refer to an $\F_2$-linear one. {Similarly, we will call a function affine if it is sum of a linear function and a constant.}

\begin{definition} \label{def:equiv}
{
	Two functions $F_1,F_2 \colon \F_{2^n}\rightarrow \F_{2^n}$ are called \emph{extended affine equivalent (EA-equivalent)} if there are affine permutations $A_1,A_2$ and an affine mapping $A_3$ mapping from $\F_{2^n}$ to itself such that 
\begin{equation}
A_1(F_1(A_2(x)))+A_3(x)=F_2(x).
\label{eq:ea}
\end{equation}
$F_1$ and $F_2$ are called \emph{affine equivalent} if they are EA-equivalent and it is possible to choose $A_3=0$ in Eq.~\eqref{eq:ea}. }

	Moreover, $F_1$ and $F_2$ are called \emph{CCZ-equivalent} if there are linear  functions $\alpha,\beta,\gamma,\delta \colon \F_{2^n} \rightarrow \F_{2^n}$ and $a,b \in \F_{2^n}$ such that $\mathcal L \colon \F_{2^n}^2 \rightarrow \F_{2^n}^2$
	defined by 
	\begin{equation*}
		\mathcal L (x,y)=(\alpha(x)+\beta(y),\gamma(x)+\delta(y))
	\end{equation*}
	is bijective and 
	$$
	\mathcal{L}(G_{F_1})+(a,b) = G_{F_2}.
	$$
	
$F_1$ and $F_2$ are EA-equivalent if and only if a mapping $\mathcal{L}$ defined as above can be found with $\beta = 0$, and affine equivalent if and only if a mapping $\mathcal{L}$ can be found with $\beta = \gamma = 0$.

\end{definition}

The concept of CCZ-equivalence was introduced in \cite{ccz} in 1998.
It has been extensively studied, since it is a powerful tool for constructing and 
studying cryptological functions \cite{eaccz,cczeagraph,twisting,twisting2}.
Clearly, affine equivalence implies EA-equivalence, which in turn implies CCZ-equivalence. 
 The size of the image set is invariant under affine equivalence
 but in general it is changed under EA-equivalence. Nonlinearity and differential uniformity are invariant under CCZ-equivalence.  \\

\textbf{Outline.} In this paper, we consider  EA- and CCZ-equivalence to the inverse function. This is a particularly interesting case because of the good cryptographic properties of the inverse function. In the second section, we show that some questions about CCZ- and EA-equivalence to a function $F$ are related to the existence of permutations  of the form $L_1(F(x))+L_2(x)$. Accordingly, we investigate the existence of permutations of the form $L_1(x^{-1})+L_2(x)$. This problem is related to Kloosterman zeros, i.e. elements whose Kloosterman sum is zero. In Section~\ref{sec:kloos} we give an upper bound on the maximal size of a subspace of $\F_{2^n}$ that contains only Kloosterman zeros.  Using this result, we show in Section~\ref{sec:last} that there are no permutations of the form $L_1(x^{-1})+L_2(x)$ if $\ker(L_1)$ or $\ker(L_2)$ is large.

\section{EA- and CCZ-equivalence and specific permutations }

The only known examples of APN permutations on $\F_{2^n}$ with $n$ even are
constructed for $n=6$ via study of the set of CCZ-equivalent functions to a known non-bijective function
in \cite{dillon-perm}. The question about existence of APN permutations 
for an even $n \geq 8$ is considered as the biggest challenge in the research on APN functions. As the examples in \cite{dillon-perm} suggest, a better understanding of CCZ-equivalence for permutations
could be essential for progressing on this topic. Proposition \ref{prop:start} shows that this is
closely related to study of permutations of form $L_1(F(x)) + L_2(x)$ with linear $L_1, L_2$.
We would like to note that similar results  are mentioned in various papers,
for instance in \cite{twisting,twisting2,eaccz}.\\


\begin{proposition} \label{prop:start} 
\begin{description}
	\item[(a)] Let $F \colon \F_{2^n} \rightarrow \F_{2^n}$ and no permutation of the form $F(x)+L(x)$ exist with non-zero linear $L(x)$. Then every permutation that is EA-equivalent to $F$ is already affine equivalent to it. In particular, if such an $F$ is not bijective, then there are no  EA-equivalent 
	permutations to $F$.
		\item[(b)] Let $F \colon \F_{2^n} \rightarrow \F_{2^n}$ and no permutation of 
		the form $L_1(F(x))+L_2(x)$ exist with non-zero linear $L_1,L_2$. 
		Then every function that is CCZ-equivalent to $F$ is EA-equivalent to 
		$F$ or $F^{-1}$ (if it exists). Moreover, all permutations that are CCZ-equivalent to $F$ are affine equivalent to $F$ or $F^{-1}$.
	\end{description}
	
\end{proposition}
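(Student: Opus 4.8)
The plan is to handle the two parts separately, in each case writing the equivalence map explicitly and peeling off affine permutations until a permutation of the forbidden shape appears, at which point the hypothesis forces a degeneracy.

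For part (a), I would start from a permutation $G$ that is EA-equivalent to $F$, so $G(x)=A_1(F(A_2(x)))+A_3(x)$ for affine permutations $A_1,A_2$ and an affine map $A_3$. Writing $A_1(y)=L_1(y)+c_1$ and $A_3(x)=L_3(x)+c_3$ with $L_1$ a linear permutation and $L_3$ linear, I would apply $L_1^{-1}$ and drop additive constants (translation preserves being a permutation in characteristic $2$) to see that $F(A_2(x))+L_1^{-1}(L_3(x))$ is a permutation. Substituting $x=A_2^{-1}(y)$, which is again an affine permutation, and discarding the resulting constant, this becomes a permutation of the form $F(y)+L'(y)$ with $L'=L_1^{-1}\circ L_3\circ L_2^{-1}$ linear, where $L_2$ is the linear part of $A_2$. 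The hypothesis forces $L'=0$, hence $L_3=0$ because $L_1,L_2$ are invertible; thus $A_3$ is constant and can be absorbed into $A_1$, giving affine equivalence. The ``in particular'' assertion is then immediate, since affine equivalence preserves bijectivity: a non-bijective $F$ admits no affine-equivalent permutation, hence by the first part no EA-equivalent permutation either.

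For part (b), I would take a function $G$ that is CCZ-equivalent to $F$ via a bijective $\mathcal L(x,y)=(\alpha(x)+\beta(y),\gamma(x)+\delta(y))$ and constants $a,b$, so that $\mathcal L(G_F)+(a,b)=G_G$. The crucial observation is that, for the right-hand side to be the graph of a function, the first-coordinate map $\pi(x)=\alpha(x)+\beta(F(x))+a$ must be a permutation. Since $\pi$ has the form $L_1(F(x))+L_2(x)+\text{const}$ with $L_1=\beta$ and $L_2=\alpha$, the hypothesis forces $\alpha=0$ or $\beta=0$. If $\beta=0$, then $\mathcal L$ witnesses EA-equivalence of $G$ to $F$ directly by the definition. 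If $\alpha=0$, then bijectivity of $\mathcal L$ forces $\beta$ and $\gamma$ to be linear permutations, so $\pi=\beta\circ F+a$ is a permutation only if $F$ itself is bijective; inverting $z=\pi(x)$ and expressing $G$ through $z$ then yields $G(z)=\gamma(F^{-1}(A_2(z)))+A_3(z)$ with an affine permutation $A_2$ and affine $A_3$, i.e.\ EA-equivalence of $G$ to $F^{-1}$.

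For the ``moreover'' claim I would invoke part (a) for both $F$ and $F^{-1}$. The hypothesis of (a) for $F$ is immediate, since $F(x)+L(x)$ is the special case $L_1=\mathrm{id}$, $L_2=L$ of the forbidden form. For $F^{-1}$ I would argue by contraposition: if $F^{-1}(x)+L(x)$ were a permutation with $L\neq0$, substituting $x=F(y)$ would produce the permutation $L(F(y))+y$, a forbidden form with $L_1=L$, $L_2=\mathrm{id}$; hence part (a) applies to $F^{-1}$ as well. A permutation that is CCZ-equivalent to $F$ is therefore EA-equivalent to $F$ or $F^{-1}$, and by part (a) already affine-equivalent to it. The main obstacles I anticipate are the careful bookkeeping of the additive constants throughout (they never affect the permutation property but must be tracked to land exactly in the EA and affine normal forms) and the substitution argument transferring the hypothesis from $F$ to $F^{-1}$, which is precisely what makes the $F^{-1}$ branch of the ``moreover'' statement go through.
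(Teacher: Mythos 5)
Your proposal is correct and follows essentially the same route as the paper's own proof: both reduce part (a) to a permutation of the shape $F(x)+L(x)$ with linear $L$ (forcing $L=0$), and in part (b) both observe that the first coordinate $\pi(x)=\alpha(x)+\beta(F(x))+a$ of the CCZ map must be a permutation, split into the cases $\alpha=0$ and $\beta=0$, and transfer the hypothesis to $F^{-1}$ via the substitution $x=F(y)$ before invoking (a). The only cosmetic difference is that in (a) you work with the functional form $A_1(F(A_2(x)))+A_3(x)$ while the paper manipulates the graph map $\mathcal L(x,y)=(\alpha(x),\gamma(x)+\delta(y))$; by the paper's Definition~\ref{def:equiv} these are interchangeable.
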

\begin{proof}
	(a) Let $F_2$ be a permutation EA-equivalent to $F$. By the definition of EA-equivalence, there exist $(a,b) \in \F_{2^n}^2$ and  a bijective mapping $\mathcal{L} \colon \F_{2^n}^2 \rightarrow \F_{2^n}^2$ defined by 
	$\mathcal{L}(x,y) =(\alpha(x),\gamma(x)+\delta(y))$ with linear functions $\alpha,\gamma,\delta \colon \F_{2^n} \rightarrow \F_{2^n}$ such that 
	\begin{equation*}
		\mathcal{L}(x,F(x))+(a,b) = (\alpha(x)+a,\gamma(x)+\delta(F(x))+b)=(\pi(x),F_2(\pi(x)))
	\end{equation*}
	where $\pi \colon \F_{2^n} \rightarrow \F_{2^n}$ is the permutation given by $\pi(x) = \alpha(x)+a.$
	Note that the function $\delta$ is bijective on $\F_{2^n}$, since 
	 $\mathcal{L}$ is bijective on $\F_{2^n}^2$. Also the composition  
	 $F_2(\pi(x))$ is bijective on $\F_{2^n}$, implying that $\gamma(x)+\delta(F(x))$ is bijective, and hence also  $\delta^{-1}(\gamma(x))+F(x)$ is a permutation. Since $\delta^{-1}(\gamma(x))$ is linear, our  assumption on $F$ yields that $\gamma=0$, completing the proof. \\
	
(b) Let now $F_2$ be a function CCZ-equivalent to $F$. By the definition of CCZ-equivalence, there 
exist $(a,b) \in \F_{2^n}^2$ and  a bijective mapping $\mathcal{L} \colon \F_{2^n}^2 \rightarrow \F_{2^n}^2$
given by $\mathcal{L}(x,y)=(\alpha(x)+\beta(y),\gamma(x)+\delta(y))$ with linear $\alpha,\beta,\gamma,\delta \colon \F_{2^n} \rightarrow \F_{2^n}$ such that
		\begin{align*}
		\mathcal{L}(x,F(x))+(a,b) &= (\alpha(x)+\beta(F(x))+a,\gamma(x)+\delta(F(x))+b)\\
		&=(\pi(x),F_2(\pi(x)))
	\end{align*}
		where $\pi \colon \F_{2^n} \rightarrow \F_{2^n}$ is the permutation on $\F_{2^n}$ given
		by $\pi(x) = \alpha(x)+\beta(F(x))+a$.  By our assumption on $F$, either $\alpha =0$ or $\beta =0$. 
		 Assume first that $\alpha = 0$. Then $\pi(x)= \beta(F(x))+a$ and in particular both $F$ and $\beta$ are bijective. Further, $\gamma$ is bijective since $\mathcal{L}$ is bijective. We then have 
		$$\gamma(x)+\delta(F(x))+b = F_2(\pi(x)) = F_2(\beta(F(x))+a).$$
		The composition with the inverse  $F^{-1}(x)$ yields
		$$\gamma(F^{-1}(x))+\delta(x)+b = F_2(\beta(x)+a),$$ 
		and hence $F_2$ is EA-equivalent to $F^{-1}$. 
		In the case $\beta = 0$ we get similarly $\pi(x) = \alpha(x)+a$ and 
		$$\gamma(x)+\delta(F(x))+b = F_2(\pi(x)) = F_2(\alpha(x)+a),$$ where the mappings $\alpha$ and $\delta$ are bijective. Hence $F_2$ is EA-equivalent to $F$. 
		
		Now assume that $F_2$ is additionally a permutation. If $F_2$ is EA-equivalent to $F$ then $F_2$ is affine equivalent to $F$ using the statement in (a). 
		Let us now consider the case that $F_2$ is EA-equivalent to $F^{-1}$. Observe that $F^{-1}(x)+L(x)$ is a permutation if and only if $L(F(x))+x$ is a permutation, so there are no permutations of the form $F^{-1}(x)+L(x)$ by the assumption stated in the proposition. Again using (a), we conclude that $F_2$ is affine equivalent to $F^{-1}$.

	\qed
\end{proof}

 The following proposition gives a criterion when a function $L_1(F(x))+L_2(x)$ is bijective. For a linear mapping $L$, we denote by $L^*$ its adjoint mapping with respect to the bilinear form 
\begin{equation*}
\langle x,y \rangle =\Tr(xy)
\end{equation*}

where $\Tr$ is the absolute trace mapping, i.e. we have 
\begin{equation*}
\Tr(L(x)y)=\Tr(xL^*(y))
\end{equation*}
for all $x,y \in \F_{2^n}$. Further, for a subset $A \subseteq \F_{2^n}$ we denote by $A^\perp$
its orthogonal complement, that is
\begin{equation*}
A^\perp=\{x \in \F_{2^n} \colon \Tr(ax)=0 \text{ for all } a \in A\}.
\end{equation*}

\begin{proposition} \label{prop:walshzeroes}
	Let $F \colon \F_{2^n} \rightarrow \F_{2^n}$ and $L_1,L_2$ be linear mappings. The function $L_1(F(x))+L_2(x)$ is a permutation if and only if
	\begin{equation*}
		W_F(L_1^*(b),L_2^*(b)) =0
	\end{equation*}
	for all $b \in \F_{2^n}^*$.
\end{proposition}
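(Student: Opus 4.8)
The plan is to combine two standard ingredients: the Fourier-analytic characterization of permutations via their component sums, and the defining property of the adjoint mapping. The whole argument reduces to rewriting a single exponential sum.

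First I would recall the general criterion that a function $G \colon \F_{2^n} \to \F_{2^n}$ is a permutation if and only if
$$\sum_{x \in \F_{2^n}} (-1)^{\Tr(b G(x))} = 0 \quad \text{for every } b \in \F_{2^n}^*.$$
To justify this, set $N(y) = |\{x \in \F_{2^n} \colon G(x) = y\}|$ and observe that $\sum_{x} (-1)^{\Tr(b G(x))} = \sum_{y} N(y)(-1)^{\Tr(by)}$, which is precisely the Fourier transform of $N$ evaluated at $b$. The term $b=0$ always yields $\sum_y N(y) = 2^n$, so the vanishing of all the remaining Fourier coefficients forces $N$ to be the constant function $1$ by Fourier inversion; that is exactly the statement that $G$ is bijective. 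The converse direction is immediate, since $\sum_{y} (-1)^{\Tr(by)} = 0$ for $b \neq 0$ when $G$ is a permutation.

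Then I would apply this criterion to $G(x) = L_1(F(x)) + L_2(x)$ and transport the component sum to a Walsh coefficient of $F$ using the adjoint. For a fixed $b$, linearity of the trace and of $L_1, L_2$ gives $\Tr(b G(x)) = \Tr(b L_1(F(x))) + \Tr(b L_2(x))$, and the defining relation $\Tr(L(x)y) = \Tr(x L^*(y))$ rewrites this as $\Tr(L_1^*(b) F(x)) + \Tr(L_2^*(b) x)$. Summing over $x$ and comparing with the definition of the Walsh transform yields
$$\sum_{x \in \F_{2^n}} (-1)^{\Tr(b G(x))} = \sum_{x \in \F_{2^n}} (-1)^{\Tr(L_1^*(b) F(x) + L_2^*(b) x)} = W_F(L_1^*(b), L_2^*(b)).$$
Combining the two steps closes the argument: $G$ is a permutation exactly when the left-hand side vanishes for all $b \in \F_{2^n}^*$, which is exactly the condition $W_F(L_1^*(b), L_2^*(b)) = 0$ for all $b \in \F_{2^n}^*$.

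I do not expect a genuine obstacle in this proof; it is essentially a change of variables inside a character sum. The only place needing mild care is the Fourier-inversion step in the permutation criterion, where one must note that the single constraint $\sum_y N(y) = 2^n$ together with the vanishing of every nonzero Fourier coefficient pins $N$ down uniquely to the all-ones function, rather than merely showing that each component function of $G$ is balanced.
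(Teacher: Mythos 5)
Your proposal is correct and takes essentially the same route as the paper: the paper also reduces the statement to the standard criterion that a function is a permutation if and only if all its nonzero component sums $\sum_x (-1)^{\Tr(bG(x))}$ vanish (which it cites as \cite[Theorem 7.7]{LN}), and then performs the identical rewriting $\Tr(b(L_1(F(x))+L_2(x)))=\Tr(L_1^*(b)F(x)+L_2^*(b)x)$ via the adjoint. The only difference is that you prove the balancedness criterion inline by Fourier inversion instead of citing it, which is a correct and self-contained substitute.
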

\begin{proof}
	It is well-known that a function is a permutation if and only if all of its component functions are balanced (for a proof, see \cite[Theorem 7.7]{LN}). Consequently, $L_1(F(x))+L_2(x)$ is a permutation if and only if
	\begin{align*}
		0&=\sum_{x \in \F_{2^n}} (-1)^{\Tr(b(L_1(F(x))+L_2(x)))} \\
		&=\sum_{x \in \F_{2^n}} (-1)^{\Tr(L_1^*(b)F(x)+L_2^*(b)x)} = W_F(L_1^*(b),L_2^*(b))
	\end{align*}
	for all $b \in \F_{2^n}^*$.
		\qed
\end{proof}

Permutations of form $L_1(F(x))+L_2(x)$ are characterized for some special choices of $F$ and $L_1,L_2$.
It was shown in \cite{charpinpasalic} that no permutation of the form $x^d+L(x)$ exists when there is an $a \in \F_{2^n}$  such that $\Tr(ax^d)$ is bent. 
Corollary 2.3 from \cite{gerike-kyureg}  implies that 
 $x^d + L(K(x))$ is not bijective on $\F_{q}$ for an arbitrary function $K$ whenever $\gcd(d, q-1) \ne 1$
and $L$ is a non-bijective linear funtion.
In  \cite{eapoly} a characterization of all permutations of the form $x^{2^i+1}+L(x)$ over $\F_{2^n}$ with $\gcd(i,n)=1$ was given, as well as some results for the more general case $x^d+L(x)$. Permutations of the form $x^{2^i+1}+L(x)$ over $\F_{2^n}$ with $\gcd(i,n)>1$ were recently considered in \cite{leoboomerang}. 
A particularly interesting  case are the functions of shape $L_1(x^{-1})+L_2(x)$ because of their good cryptographic properties. 
(Here we use as usual the convention $0^{-1}=0$.) 
 It was shown in  \cite{oddchar} that such functions are never permutations in characteristic $\geq 5$ (except for the trivial cases $L_1=0$ or $L_2=0$). In characteristic $3$, no permutations of the type $x^{-1}+L(x)$ with $L\neq 0$ exist, except for sporadic cases in the small fields $\F_3$ and $\F_9$. In this paper we are interested in the case of characteristic 2. If $L_1$ or $L_2$ is bijective, then 
 $L_1(x^{-1})+L_2(x)$ cannot be bijective  on $\F_{2^n}$ for $n\geq 5$ as shown in \cite{eainverse}.

\begin{theorem}[\cite{eainverse}] \label{thm:chin}
	Let $F\colon \F_{2^n} \rightarrow \F_{2^n}$ be defined by  $F(x)= x^{-1}+L(x)$ with some linear mapping $L(x) \neq 0$. If $n\geq 5$ then $F$ is not a permutation.
\end{theorem}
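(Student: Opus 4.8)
The plan is to translate the permutation question into a statement about Kloosterman sums via Proposition~\ref{prop:walshzeroes}, and then to rule out the resulting configuration by a combination of counting and character-sum estimates. Writing $F(x)=x^{-1}+L(x)$ as $L_1(G(x))+L_2(x)$ with $G(x)=x^{-1}$, $L_1=\mathrm{id}$ and $L_2=L$, Proposition~\ref{prop:walshzeroes} says that $F$ is a permutation if and only if $W_G(b,L^{*}(b))=0$ for all $b\in\F_{2^n}^{*}$. The first step is therefore to compute the Walsh transform of the inverse function. Substituting $x\mapsto\sqrt{a/b}\,x$ (the square root being unique in characteristic $2$) turns $\Tr(ax^{-1}+bx)$ into $\Tr(\sqrt{ab}\,(x^{-1}+x))$, which shows $W_G(a,b)=K_n(ab)$ whenever $a,b\neq 0$, while $W_G(a,b)=0$ if exactly one of $a,b$ vanishes. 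Hence $F$ is a permutation precisely when $K_n(b\,L^{*}(b))=0$ for every $b$ with $L^{*}(b)\neq 0$; equivalently, writing $M=L^{*}\neq 0$, the set $T=\{\,b\,M(b):M(b)\neq 0\,\}$ consists entirely of Kloosterman zeros. The goal is to contradict this for $n\ge 5$.

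For intuition and for the transparent instances I would first record the global identity $\sum_{a\in\F_{2^n}}K_n(a)=2^n$ together with $K_n(0)=0$, so that some nonzero $a$ is not a Kloosterman zero. When $M$ is multiplication by a constant $c\neq 0$, the value $b\,M(b)=cb^{2}$ sweeps out all of $\F_{2^n}^{*}$, so $T=\F_{2^n}^{*}$ cannot consist of Kloosterman zeros and $F$ is never a permutation. More generally, whenever $|T|$ exceeds the number of Kloosterman zeros, which is of order $2^{n/2}$ by the Weil bound and the Lachaud--Wolfmann description of the Kloosterman spectrum, the containment $T\subseteq Z$ is impossible by cardinality. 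The substance of the theorem is thus the opposite regime, where $T$ is small; this forces $L$ (equivalently $M$) to have small rank, and pure counting no longer suffices.

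To handle all ranks uniformly I would pass to an equivalent elementary description of a collision. If $F(x)=F(y)$ with $x\neq y$ and $x,y\neq 0$, then $s=x+y\neq 0$ gives $x^{-1}+y^{-1}=L(s)$, so $L(s)\neq 0$ and $xy=s/L(s)$; thus $x,y$ are the two roots of $t^{2}+st+s/L(s)$, which lie in $\F_{2^n}$ exactly when $\Tr\big(1/(sL(s))\big)=0$. Consequently $F$ fails to be injective as soon as there exists $s$ with $L(s)\neq 0$ and $\Tr\big(1/(sL(s))\big)=0$ (a separate, easier, collision with $0$ occurs when $y\,L(y)=1$). It therefore suffices to show that the character sum $\Sigma=\sum_{s:\,L(s)\neq 0}(-1)^{\Tr(1/(sL(s)))}$ satisfies $\Sigma>-(2^{n}-|\ker L|)$, and since $L\neq 0$ forces $|\ker L|\le 2^{n-1}$, i.e. $2^{n}-|\ker L|\ge 2^{n-1}$, it is enough to prove $|\Sigma|<2^{n-1}$ for $n\ge 5$. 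The case $\mathrm{rank}(L)=1$ is immediate and instructive: writing $L(s)=\Tr(\alpha s)\gamma$ and restricting to the hyperplane $\Tr(\alpha s)=1$, the substitution $w=1/(s\gamma)$ counts the admissible collisions as $\tfrac14\big(2^{n}-K_n(\alpha/\gamma)\big)$, which is positive for $n\ge 3$ because $K_n(\alpha/\gamma)\le 1+2^{n/2+1}<2^{n}$.

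The main obstacle is the estimate of $\Sigma$ for intermediate small ranks. Grouping $s$ by the value $v=L(s)$ over the cosets of $\ker L$ rewrites $\Sigma$ as a sum, over the $\mathrm{rank}(L)$ nonzero values $v$, of exponential sums $\sum_{w\in\ker L}(-1)^{\Tr(1/(v(s_v+w)))}$ taken over the subspace $\ker L$ rather than over the whole field. Such restricted sums do not directly inherit the clean Weil bound available for full Kloosterman sums, and controlling them --- presumably by exploiting that the leading term $x^{-1}$ carries coefficient $1$, so that the relevant sum can after all be realised over $\F_{2^n}$ --- is the technical heart of the argument. This is exactly where the hypothesis $n\ge 5$ is consumed, the finitely many small fields $n\le 4$ being settled by direct inspection.
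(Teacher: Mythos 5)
Your reduction is sound as far as it goes: the identity $W_G(a,b)=K_n(ab)$ for $a,b\neq 0$, the reformulation ``$F$ permutes $\F_{2^n}$ iff $K_n(bL^*(b))=0$ whenever $L^*(b)\neq 0$'', and the collision computation (for $x\neq y$ nonzero, $s=x+y$ yields $xy=s/L(s)$ and solvability iff $\Tr\bigl(1/(sL(s))\bigr)=0$) are all correct, and your rank-one case checks out. Note, however, that the paper itself contains no proof of this theorem: it is imported from \cite{eainverse}, and the paper's surrounding machinery (Corollary~\ref{cor:inverse} together with the mod-$16$ characterization of Theorem~\ref{thm:faruk}, which forces $\Tr(bL^*(b))=0$ and $Q(bL^*(b))=0$ for all $b$, after which one polarizes the trace identity) indicates the divisibility route taken there --- quite different from your collision-counting plan.

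The genuine gap is the estimate $|\Sigma|<2^{n-1}$, which you defer as ``the technical heart'' but which is in fact equivalent to the theorem itself, not a lemma feeding into it: $F$ is injective on the nonzero pairs exactly when $\Tr\bigl(1/(sL(s))\bigr)=1$ for \emph{every} $s$ with $L(s)\neq 0$, i.e.\ exactly when $\Sigma=-(2^n-|\ker L|)\leq -2^{n-1}$. So proving $|\Sigma|<2^{n-1}$ is a quantitative restatement of what you must show, and no proof of it is offered. Neither of your two fallback regimes closes this. Completing each coset sum $\sum_{w\in\ker L}(-1)^{\Tr(1/(v(s_v+w)))}$ against the dual space and applying the Weil bound gives roughly $2^{n/2+1}$ per coset, but with $2^{r}-1$ nonzero values $v$ (where $r=\mathrm{rank}(L)$) the total is about $2^{r}\cdot 2^{n/2+1}$, which exceeds $2^{n-1}$ as soon as $r\gtrsim n/2-2$; so only small ranks are covered. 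The counting regime for large rank fails too: you never lower-bound $|T|$, and no useful bound holds in general --- for $n=2k$ and $M(b)=b^{2^k}$ (a bijective $M$!), $bM(b)$ is the norm onto $\F_{2^k}$, so $|T|=2^{n/2}-1$; meanwhile the only \emph{proven} upper bound on the number of Kloosterman zeros cited in the paper is $O(2^{3n/4})$ with an unspecified constant (\cite{shpar}; the value $\approx 2^{n/2}$ in Table~\ref{t:zeros} is empirical). Hence even full-rank $L$, let alone the intermediate ranks, is not actually handled, and the argument as proposed does not yield the theorem.
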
 

The following result is an immediate consequence of Theorem \ref{thm:chin}.
\begin{corollary}\label{thm:chin-gen}
	Let $n\geq 5$ and $F\colon \F_{2^n} \rightarrow \F_{2^n}$ be defined by  $F(x) = L_1(x^{-1})+L_2(x)$,
	where $L_1, L_2$ are non-zero linear functions of $\F_{2^n}$. If $L_1$ or $L_2$ is bijective, then $F$ is not a permutation on $\F_{2^n}$.
\end{corollary}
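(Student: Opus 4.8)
The plan is to derive the corollary from Theorem~\ref{thm:chin} by using two elementary invariance principles: the permutation property of a map on $\F_{2^n}$ is preserved both under post-composition with a linear bijection and under pre-composition with the inversion map $x \mapsto x^{-1}$. The latter is legitimate because, with the convention $0^{-1}=0$, inversion is an involution (we have $(x^{-1})^{-1}=x$ for every $x\in\F_{2^n}$, including $x=0$) and hence a bijection of $\F_{2^n}$.

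First I would treat the case where $L_1$ is bijective. Since $L_1$ is a linear bijection, $F(x)=L_1(x^{-1})+L_2(x)$ is a permutation if and only if $L_1^{-1}(F(x))$ is, and
\[
L_1^{-1}\bigl(F(x)\bigr) = x^{-1} + L_1^{-1}\bigl(L_2(x)\bigr).
\]
The map $L_1^{-1}\circ L_2$ is linear, and it is non-zero because $L_2\neq 0$ and $L_1^{-1}$ is a bijection. Theorem~\ref{thm:chin} therefore applies and shows that $x^{-1}+L_1^{-1}(L_2(x))$ is not a permutation for $n\geq 5$; consequently $F$ is not a permutation.

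For the case where $L_2$ is bijective, I would first apply the inversion substitution to exchange the roles of $L_1$ and $L_2$. Precomposing $F$ with $x\mapsto x^{-1}$ gives
\[
F(x^{-1}) = L_1(x) + L_2(x^{-1}),
\]
so $F$ is a permutation if and only if $G(x):=L_2(x^{-1})+L_1(x)$ is a permutation. Now $G$ has precisely the shape to which the previous paragraph applies, with the bijective map $L_2$ appearing as the outer linear function; composing with $L_2^{-1}$ yields $x^{-1}+L_2^{-1}(L_1(x))$ with $L_2^{-1}\circ L_1\neq 0$, which is not a permutation by Theorem~\ref{thm:chin}. Hence $G$, and therefore $F$, fails to be a permutation.

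I do not expect a genuine obstacle, since the statement is an immediate consequence of Theorem~\ref{thm:chin} as claimed. The only point demanding care is the $L_2$-bijective case: one must verify that $x\mapsto x^{-1}$ is indeed a bijection of $\F_{2^n}$ and that it interacts correctly with the convention $0^{-1}=0$, so that the reduction to $G$ is valid and the non-zero outer coefficient is preserved.
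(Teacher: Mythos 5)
Your proposal is correct and follows the same route as the paper: reduce to the case of bijective $L_1$ via the substitution $x \mapsto x^{-1}$ (valid since inversion is an involution under the convention $0^{-1}=0$), then compose with $L_1^{-1}$ and invoke Theorem~\ref{thm:chin}. Your explicit check that $L_1^{-1}\circ L_2 \neq 0$ is a small point the paper leaves implicit, but otherwise the arguments coincide.
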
 
\begin{proof} Note that $F(x)$ is bijective if and only if $F(x^{-1}) = L_1(x)+L_2(x^{-1})$ is so.
Hence without loss of generality suppose $L_1$ is bijective. Then the composition
$L_1^{-1}(F(x)) = x^{-1} + L_1^{-1}(L_2(x))$ is bijective if and only if $F$ is so, and
 Theorem \ref{thm:chin} completes the proof. 
\qed
\end{proof}

In this paper, we 
continue the study of functions  $L_1(x^{-1})+L_2(x)$ where $L_1,L_2$ are linear polynomials over $\F_{2^n}$. In the case of the inverse function $x\mapsto x^{-1}$, the Walsh transform is closely connected to Kloosterman sums. 
\begin{definition}
	For $a \in \F_{2^n}$, the Kloosterman sum of $a$ over $\F_{2^n}$ is defined as
	\begin{equation*}
		K_n(a)=\sum_{x \in \F_{2^n}}(-1)^{\Tr(x^{-1}+ax)}.
	\end{equation*}
	An element $a \in \F_{2^n}$ with $K_n(a)=0$	is called a Kloosterman zero.
\end{definition}

Note $K_n(a)=W_F(1,a)$ for $F(x)=x^{-1}$. More precisely, for $a\ne 0$ we have 
\begin{equation*}
W_F(a,b)=\sum_{x \in \F_{2^n}} (-1)^{\Tr(ax^{-1}+bx)}=\sum_{x \in \F_{2^n}} (-1)^{\Tr(x^{-1}+abx)}=K_n(ab)
\end{equation*}
using the substitution $x \mapsto ax$. For $a=0$ and $b \neq 0$, we have $K_n(ab)=W_F(a,b)=0$. \\

Proposition~\ref{prop:walshzeroes} can thus be stated using Kloosterman sums:
\begin{corollary} \label{cor:inverse}
	Let  $L_1,L_2$ be linear functions of $\F_{2^n}$. Then $L_1(x^{-1})+L_2(x)$ is a permutation on $\F_{2^n}$ if and only if
	$\ker(L_1^*)\cap \ker(L_2^*)=\{0\}$ and
		\begin{equation*}
		K_n(L_1^*(b)L_2^*(b)) =0
	\end{equation*}
	for all $b \in \F_{2^n}$.
\end{corollary}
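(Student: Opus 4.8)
The plan is to specialise Proposition~\ref{prop:walshzeroes} to $F(x)=x^{-1}$ and then translate the resulting Walsh-transform condition into a statement about Kloosterman sums via the identity $W_F(a,b)=K_n(ab)$ recorded above. First I would invoke Proposition~\ref{prop:walshzeroes}, which says that $L_1(x^{-1})+L_2(x)$ is a permutation exactly when $W_F(L_1^*(b),L_2^*(b))=0$ for every $b\in\F_{2^n}^*$. The entire task then reduces to rewriting $W_F(L_1^*(b),L_2^*(b))$ in terms of $K_n$, which I would do by a short case analysis on whether the pair $(L_1^*(b),L_2^*(b))$ degenerates.

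If $L_1^*(b)\neq 0$, the identity $W_F(a,c)=K_n(ac)$ applies directly and gives $W_F(L_1^*(b),L_2^*(b))=K_n(L_1^*(b)L_2^*(b))$. If $L_1^*(b)=0$ but $L_2^*(b)\neq 0$, then $W_F(0,L_2^*(b))=0$, while at the same time $K_n(L_1^*(b)L_2^*(b))=K_n(0)=0$, so the two sides still agree. Thus, provided $(L_1^*(b),L_2^*(b))\neq(0,0)$, the Walsh value coincides with $K_n(L_1^*(b)L_2^*(b))$, and the permutation criterion for such $b$ becomes precisely $K_n(L_1^*(b)L_2^*(b))=0$.

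The single remaining case is the one I expect to require the most care, since it is where the extra kernel hypothesis enters: a nonzero $b$ with $L_1^*(b)=L_2^*(b)=0$, i.e. $b\in\ker(L_1^*)\cap\ker(L_2^*)$. For such $b$ one has $W_F(0,0)=2^n\neq 0$, so the permutation condition necessarily fails. Hence bijectivity of $L_1(x^{-1})+L_2(x)$ forces $\ker(L_1^*)\cap\ker(L_2^*)=\{0\}$; conversely, if this intersection is trivial then no nonzero $b$ is degenerate, and every $b\in\F_{2^n}^*$ satisfies $W_F(L_1^*(b),L_2^*(b))=K_n(L_1^*(b)L_2^*(b))$. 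This dichotomy is exactly what splits the original Walsh condition into the two clauses of the corollary.

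Finally I would fold in the point $b=0$ to obtain the stated quantification over all of $\F_{2^n}$. Since $L_1^*(0)=L_2^*(0)=0$ and $K_n(0)=0$, the equation $K_n(L_1^*(b)L_2^*(b))=0$ holds automatically at $b=0$, so quantifying over $b\in\F_{2^n}$ is equivalent to quantifying over $b\in\F_{2^n}^*$. Combining the non-degeneracy condition $\ker(L_1^*)\cap\ker(L_2^*)=\{0\}$ with the vanishing condition $K_n(L_1^*(b)L_2^*(b))=0$ for all $b\in\F_{2^n}$ then yields the claimed equivalence. The proof is essentially bookkeeping; the only genuine subtlety is isolating the $(0,0)$ case and recognising that it is precisely what the kernel hypothesis rules out.
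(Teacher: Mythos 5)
Your proposal is correct and follows essentially the same route as the paper: apply Proposition~\ref{prop:walshzeroes} with $F(x)=x^{-1}$, translate $W_F(L_1^*(b),L_2^*(b))$ into $K_n(L_1^*(b)L_2^*(b))$ via the identity $W_F(a,c)=K_n(ac)$, and isolate the degenerate case $b\in\ker(L_1^*)\cap\ker(L_2^*)$ where $W_F(0,0)=2^n\neq 0$. Your write-up is in fact slightly more detailed than the paper's, spelling out the subcases $L_1^*(b)=0\neq L_2^*(b)$ and the trivial inclusion of $b=0$ (using $K_n(0)=0$), which the paper compresses into ``by the considerations above.''
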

\begin{proof}
	By Proposition~\ref{prop:walshzeroes}, $L_1(x^{-1})+L_2(x)$ is a permutation if and only if $W_{F}(L_1^*(b),L_2^*(b)) =0$ for all $b \neq 0$. If $b \in \ker(L_1^*)\cap \ker(L_2^*)$, then $W_{F}(L_1^*(b),L_2^*(b))=2^n\neq 0$. In the other cases $W_{F}(L_1^*(b),L_2^*(b)) = K(L_1^*(b)L_2^*(b))$ by the considerations above.
	\qed
\end{proof}

Corollary~\ref{cor:inverse} shows that a function $L_1(x^{-1}) + L_2(x)$ is bijective on $\F_{2^n}$
only if 
the set $\{L^*_1(x)L^*_2(x) | x \in \F_{2^n} \}$ is a subset of the set of Kloosterman zeroes. Conversely,
in \cite{hx-ks} specific functions of shape $L_1(x^{-1}) + L_2(x)$ are used
to obtain  identities for Kloosterman sums.

\section{Vector spaces of Kloosterman zeros} \label{sec:kloos}

The Kloosterman sums provide a powerful tool for studying {additive} properties of the
inversion on finite fields. Kloosterman zeros are used  for the construction of bent and hyperbent functions (see for example  \cite{dillonthesis,hyperbent,vecbent}). Vector spaces of Kloosterman zeros of dimension $d$ in $\F_{2^n}$ can be used to construct vectorial bent functions from $\F_{2^{2n}}$ to $\F_{2^d}$ by modifying Dillon's construction, as shown in \cite[Proposition 5]{vecbent}.\\

 Few results about the distribution of Kloosterman zeros are known. There is a way to compute the number of Kloosterman zeros \cite{goppa}, which relies on determining the class number of binary quadratic forms. However, it is difficult to use this method to derive a theoretical result on the number and distribution of Kloosterman sums. It was shown that for all $n$, Kloosterman zeros exist \cite{lachaud} (note that this is not true in characteristic $ \geq 5$  \cite{finns}). Moreover, it is known that for $n>4$, Kloosterman zeros are never contained in proper subfields of $\F_{2^n}$ \cite{zeros}. In \cite{shpar}, it is noted that $|\{a\in \F_{2^n} \colon K_n(a)=0\}|=O(2^{3n/4})$. In this section, we give an upper bound for the size of vector spaces that contain exclusively Kloosterman zeros. \\

Let $B$ be a bilinear form from $\F_{2^n}$ to $\F_2$. We denote by $\rad(B)=\{y \in \F_{2^n} \colon B(x,y)=0 \text{ for all } x \in \F_{2^n}\}$ the radical of $B$. Given a quadratic form $f:\F_{2^n} \to \F_2$, let
 $B_f(x,y)=f(x)+f(y)+f(x+y)$ be the bilinear form associated to it. 
The radical of the quadratic form $f$ is defined as $\rad(B_f) \cap f^{-1}(\{0\}).$
A quadratic form is called non-degenerate if $\rad(f)=\{0\}$. \\

Let $Q \colon \F_{2^n} \rightarrow \F_2$ be the quadratic form defined by 
\begin{equation*}
Q(x)=\sum_{0 \leq i <j <n}x^{2^i+2^j}
\end{equation*}
for all $x \in \F_{2^n}$. Note that if $m_a$ is the minimal polynomial of $a \in \F_{2^n}$ over $\F_2$ of degree $d$, then $Q(a)$ is the third coefficient of $m_a^{n/d}$. Indeed, recall that
 $m_a^{n/d} = \chi_a$ is the characteristic polynomial of $a$ over $\F_{2}$ and $\chi_a(x)=\sum_{i=0}^{n-1}(x+a^{2^i})$. By expanding the product, we see that $Q(a)$ is the coefficient of $x^{n-2}$ as claimed. This in particular shows that $Q(a) \in \F_2$. \\

The dyadic approximation of Kloosterman sums are often used to study Kloosterman zeroes.
A nice survey on this topic is given in \cite{zinoviev-survey}.
The main tool for results in this section is the following characterization 
of Kloosterman sums divisible by  $2^4$.

\begin{theorem}[\cite{faruk}] \label{thm:faruk}
	Let $n\geq 4$ and $a \in \F_{2^n}$. Then $K_{n}(a) \equiv 0 \pmod {16}$ if and only if $\Tr(a)=0$ and $Q(a)=0$. 
\end{theorem}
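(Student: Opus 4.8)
The plan is to compute $K_n(a)$ modulo $16$ through its expansion into Gauss sums followed by a $2$-adic valuation analysis, in the Stickelberger/Gross--Koblitz circle of ideas. Write $q=2^n$, fix the Teichm\"uller character $\omega$ of $\F_q^*$ (a generator of the character group), and set $\tau(\chi)=\sum_{x\in\F_q^*}\chi(x)(-1)^{\Tr(x)}$. Expanding the additive character in the definition of $K_n(a)$ into multiplicative characters and using orthogonality yields, for $a\neq 0$, an identity of the shape
\begin{equation*}
K_n(a)=\frac{q}{q-1}+\frac{1}{q-1}\sum_{j=1}^{q-2}\tau(\omega^{-j})^2\,\omega^{j}(a),
\end{equation*}
where the trivial character has been separated off. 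I would then work in $\Z_2[\zeta_{q-1}]$ and read this congruence $2$-adically. Since $n\geq 4$ gives $q\equiv 0$ and $q-1\equiv -1 \pmod{16}$, the first summand vanishes modulo $16$ and $(q-1)^{-1}\equiv -1$.

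The $2$-adic input is Stickelberger's theorem (via Gross--Koblitz): $v_2(\tau(\omega^{-j}))=s(j)$, where $s(j)$ is the sum of the binary digits of $j$. Because multiplication by $2$ cyclically permutes these digits, $s$ is constant on each cyclotomic coset and each coset sum lies in $\Z_2$. Hence $\tau(\omega^{-j})^2$ has valuation $2s(j)$, so every coset with $s(j)\geq 2$ contributes a multiple of $16$ and may be discarded; only the weight-one coset $\{1,2,\dots,2^{n-1}\}$ survives. Here the Frobenius relation $\tau(\chi^2)=\tau(\chi)$ makes $\tau(\omega^{-2^i})=\tau(\omega^{-1})$ for all $i$, forces $\tau(\omega^{-1})$ to be Frobenius-invariant and thus rational, and (as $s(1)=1$) gives $\tau(\omega^{-1})=2u_0$ with $u_0\in\Z_2^*$, so $\tau(\omega^{-1})^2=4u_0^2\equiv 4\pmod{16}$. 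Using $\omega^{2^i}(a)=\omega(a)^{2^i}=\omega(a^{2^i})$, the surviving contribution collapses, up to unit factors, to $4P_1$ with $P_1=\sum_{i=0}^{n-1}\omega(a)^{2^i}=\mathrm{Tr}_{\Q_2(\zeta_{q-1})/\Q_2}(\omega(a))$. Collecting units, one obtains
\begin{equation*}
K_n(a)\equiv -4\,P_1\pmod{16},
\end{equation*}
so the claim reduces to: $P_1\equiv 0\pmod 4$ if and only if $\Tr(a)=0$ and $Q(a)=0$.

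To finish I would use the power sums of the Teichm\"uller lifts $\theta_i=\omega(a^{2^i})$. Since squaring permutes the conjugates, the second power sum equals the first, $P_2=\sum_i\theta_i^2=P_1$; Newton's identity $P_2=P_1^2-2e_2$ (with $e_1=P_1$) then gives the clean relation $2e_2=P_1^2-P_1$. Reducing modulo the prime above $2$, $e_2=\sum_{i<j}\theta_i\theta_j\equiv \sum_{i<j}a^{2^i+2^j}=Q(a)$. Now $P_1\equiv\Tr(a)\pmod 2$, as the trace of a Teichm\"uller lift reduces to the field trace, so $P_1\equiv 0\pmod 4$ already forces $\Tr(a)=0$; writing $P_1=2t$ in that case, the relation $2e_2=P_1^2-P_1$ shows $Q(a)\equiv e_2\equiv t\pmod 2$, whence $P_1\equiv 0\pmod 4$ exactly when $t$ is even, i.e. $Q(a)=0$. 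Together with the case $\Tr(a)=1$ (where $P_1$ is odd and the congruence fails), this is precisely the assertion.

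The main obstacle is the second step: pinning down the precise $2$-adic valuations and, above all, the leading behaviour $\tau(\omega^{-1})=2u_0$ with a \emph{rational} $u_0$, which is where Stickelberger and the relation $\tau(\chi^2)=\tau(\chi)$ do the real work. One must also verify carefully that every higher-weight coset is genuinely divisible by $16$ and that the factor $1/(q-1)$ behaves as claimed, both of which rely on $n\geq 4$. Once the reduction $K_n(a)\equiv -4P_1\pmod{16}$ is secured, the concluding power-sum computation is short and exact.
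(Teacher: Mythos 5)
The paper contains no proof of this statement at all---it is imported as a black box from \cite{faruk}---so there is no internal argument to compare against; your proposal is, in substance, a correct reconstruction of the proof in that cited reference, which likewise expands $K_n(a)$ into squares of Gauss sums, applies Stickelberger/Gross--Koblitz to discard every cyclotomic coset of binary weight at least $2$, and converts the surviving weight-one contribution into conditions on $\Tr(a)$ and $Q(a)$ via Teichm\"uller power sums. The delicate points all check out: $q \equiv 0$ and $(q-1)^{-1}\equiv -1 \pmod{16}$ for $n \geq 4$; the relation $\tau(\chi^2)=\tau(\chi)$ forcing $\tau(\omega^{-1})=2u_0$ with $u_0 \in \Z_2^*$, where moreover $u_0^2 \equiv 1 \pmod 8$ so the unit really can be absorbed modulo $16$; the Frobenius-invariance making each coset sum rational; and the closing computation $2e_2 = P_1^2 - P_1$ with $e_2 \equiv Q(a)$ and $P_1 \equiv \Tr(a) \pmod 2$ (the excluded case $a=0$ being immediate, since $K_n(0)=0$ and $\Tr(0)=Q(0)=0$).
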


{Theorem~\ref{thm:faruk} implies that the Kloosterman zeroes are contained
in the intersection of the quadric $\{x \in \F_{2^n} \colon Q(x) = 0\}$ and the hyperplane 
 $$H=\{x\in\F_{2^n} \colon \Tr(x)=0\}.$$  
Therefore we consider the quadratic form $Q|_H$ which is induced by $Q$ on $H$. 
We first determine its radical.}

\begin{lemma} \label{lem:radical}
We have 
	\begin{equation*}
		\rad(Q|_H)=\begin{cases}
							\{0,1\}, & n \equiv 0 \pmod 4 \\
							\{0\}, &\text{ else.} 
							\end{cases}
	\end{equation*}
\end{lemma}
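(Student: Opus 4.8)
The plan is to compute the radical of $Q|_H$ directly from the definitions. Recall that $\rad(Q|_H) = \rad(B_{Q|_H}) \cap (Q|_H)^{-1}(\{0\})$, so the strategy splits into two parts: first identify the radical of the associated bilinear form $B_{Q|_H}$ restricted to $H$, then check on which of those radical vectors the quadratic form $Q$ itself vanishes.

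First I would compute the bilinear form $B_Q(x,y) = Q(x)+Q(y)+Q(x+y)$ globally on $\F_{2^n}$. Expanding $Q(x+y) = \sum_{i<j}(x+y)^{2^i+2^j}$ and collecting the cross terms, one finds an explicit expression of the form $B_Q(x,y) = \sum_{i \neq j} x^{2^i} y^{2^j}$ (up to the symmetric indexing), which after a standard manipulation using the Frobenius and the trace can be rewritten as $B_Q(x,y) = \Tr\bigl(\ell(x)\,y\bigr)$ for a suitable linearized polynomial $\ell$; the key point is to express $B_Q$ as $\langle x, \phi(y)\rangle$ for an explicit $\F_2$-linear map $\phi$. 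I expect that this computation yields something like $B_Q(x,y)=\Tr(x)\Tr(y)+\Tr(xy)$ or a closely related shape, so that on the hyperplane $H$ (where $\Tr(x)=\Tr(y)=0$) the form $B_{Q|_H}$ collapses to $\Tr(xy)$, whose radical inside $H$ one can read off. The radical of $B_{Q|_H}$ consists of those $y \in H$ with $B_Q(x,y)=0$ for all $x \in H$, which—since $H$ has codimension $1$—amounts to $\phi(y)$ lying in the line $\F_2\cdot 1$ spanned by $1$ together with $y \in H$.

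The main obstacle will be pinning down this bilinear form computation cleanly and then solving the resulting linear condition to see exactly which $y$ survive; I anticipate the candidate elements are $0$ and $1$, and the dependence on $n \bmod 4$ enters precisely here. Having located $\rad(B_{Q|_H})$, I would evaluate $Q$ on the nonzero candidate, namely $Q(1)$. By the characteristic-polynomial interpretation given in the excerpt, $Q(1)$ is the coefficient of $x^{n-2}$ in $(x+1)^n = \chi_1(x)$, which is $\binom{n}{2} \bmod 2$. By Lucas' theorem, $\binom{n}{2} = \binom{n}{2}$ is even exactly when $n \equiv 0,1 \pmod 4$. I would combine this with the membership condition $1 \in H$, i.e. $\Tr(1)=0$, which holds precisely when $n$ is even. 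Cross-referencing the two conditions should isolate $n \equiv 0 \pmod 4$ as the unique case where $1$ lies in both $H$ and the radical of $B_{Q|_H}$ and simultaneously satisfies $Q(1)=0$, forcing $\rad(Q|_H)=\{0,1\}$, and giving $\rad(Q|_H)=\{0\}$ otherwise.

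A subtlety to handle carefully is the distinction between $\rad(B_{Q|_H})$ and the full radical $\rad(Q|_H)$: a vector can lie in $\rad(B_{Q|_H})$ yet fail $Q=0$, in which case it is excluded from $\rad(Q|_H)$. So I would verify in each residue class of $n \bmod 4$ both the bilinear condition and the quadratic condition $Q(y)=0$ on the candidate $y=1$, making sure the two-step filter is applied consistently. This case analysis, driven entirely by the two elementary invariants $\Tr(1)$ and $Q(1)=\binom{n}{2}\bmod 2$, is what produces the clean dichotomy in the statement.
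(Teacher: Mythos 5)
Your proposal is correct and follows essentially the same route as the paper: computing $B_Q(x,y)=\Tr(xy)+\Tr(x)\Tr(y)$, observing that it collapses to $\Tr(xy)$ on $H$ so that $\rad(B_{Q|_H})=\F_2\cap H$, and then filtering the candidate $y=1$ through the two conditions $\Tr(1)=0$ (i.e.\ $n$ even) and $Q(1)=\binom{n}{2}\bmod 2=0$ (i.e.\ $n\equiv 0,1 \pmod 4$), which together single out $n\equiv 0\pmod 4$. Your careful distinction between $\rad(B_{Q|_H})$ and the full radical $\rad(Q|_H)$ is exactly the step the paper's proof also hinges on.
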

\begin{proof}
	First we compute the bilinear form associated to $Q$:
	\begin{align*}
			B_Q(x,y)&=\sum_{0 \leq i <j <n}x^{2^i+2^j}+\sum_{0 \leq i <j <n}y^{2^i+2^j}+\sum_{0 \leq i <j <n}(x+y)^{2^i+2^j} \\
			&=\sum_{i\neq j}x^{2^i}y^{2^j}=\sum_{i=0}^{n-1}x^{2^i}\sum_{j \neq i }y^{2^j}\\
			&=\sum_{i=0}^{n-1}x^{2^i}(\Tr(y)+y^{2^i})=\sum_{i=0}^{n-1}(xy)^{2^i}+\Tr(y)\sum_{i=0}^{n-1}x^{2^i}\\
			&=\Tr(xy)+\Tr(x)\Tr(y)=\Tr((y+\Tr(y))x).
	\end{align*}
	{Since $\Tr(y)=0$ for all $y \in H$, we have
	$$
	B_{Q|_H}(x,y) = \Tr(xy).
		$$
	Then $y \in \rad(B_{Q|_H})$, if $B_{Q|_H}(x,y)= \Tr(xy) =0$ for all $x \in H$. 
	Hence $\rad(B_{Q|_H}) = \F_2 \cap H$.
	 Observe that $1 \in H$ if and only if $n$ is even, so $\rad(B_{Q|_H})=\{0\}$ if $n$ is odd and $\rad(B_{Q|_H})= \F_2$ if $n$ is even.} One can easily verify that 
	\begin{equation*}
		Q(1)=\frac{n(n-1)}{2}=\begin{cases}
			0 & n \equiv 0,1 \pmod 4 \\
			1 & n \equiv 2,3 \pmod 4
		\end{cases}
	\end{equation*}
	 and the result follows.
		\qed
\end{proof}

Let $N(Q|_H(x)=u)$ denote the number of solutions of $Q|_H(x)=u$ for $u\in \F_2$. Observe that $N(Q|_H(x)=0)$ is precisely the number of elements $x \in \F_{2^n}$ whose second and third coefficients of the characteristic polynomial $\chi_x$ are zero. The value $N(Q|_H(x)=a)$ was investigated in \cite{prescr1,prescr2,prescr3}, where  irreducible polynomials with prescribed coefficients were studied. In particular, the value $N(Q|_H(x)=0)$ was determined. We summarize some of their results in the following theorem.
\begin{theorem}\label{thm:solutions}
	Let $N(Q|_H(x)=0)$ be the number of $x \in H$ with $Q|_H(x)=0$. Then $N(Q|_H(x)=0)=2^{n-2}+e$ where
	\begin{equation*}
		e=\begin{cases}
			-2^{\frac{n-2}{2}}, &n \equiv 0\pmod 8 \\
			2^{\frac{n-3}{2}}, &n \equiv 1,7\pmod 8 \\
			0, &n \equiv 2,6\pmod 8 \\
			-2^{\frac{n-3}{2}}, &n \equiv 3,5\pmod 8 \\
			2^{\frac{n-2}{2}}, &n \equiv 4\pmod 8.
		\end{cases}
	\end{equation*}
\end{theorem}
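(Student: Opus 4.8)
The plan is to treat $Q|_H$ as a quadratic form on the $(n-1)$-dimensional $\F_2$-space $H$ and to invoke the classical formula for the number of zeros of a binary quadratic form. Recall that if $f$ is a quadratic form on $\F_2^m$ and $r=\dim\rad(B_f)$, then $m-r$ is even, say $m-r=2s$, and the number of zeros of $f$ equals $2^{m-1}$ whenever $f$ does not vanish identically on $\rad(B_f)$, while it equals $2^{m-1}+\epsilon\,2^{(m+r)/2-1}$ with $\epsilon=\pm1$ when $f$ vanishes on $\rad(B_f)$; here $\epsilon=+1$ or $-1$ according to whether the non-degenerate part of $f$ is hyperbolic ($+$-type) or elliptic ($-$-type), i.e.\ according to its Arf invariant. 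This is the single external ingredient I would quote (see \cite{LN}).

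First I would read off $r$ and the behaviour on the radical from Lemma~\ref{lem:radical} together with the value $Q(1)=n(n-1)/2\bmod 2$ already computed there. For $n$ odd, $\rad(Q|_H)$ is trivial, so $r=0$ and $m=n-1$ is even, giving a count of $2^{n-2}\pm 2^{(n-3)/2}$. For $n$ even, $\rad(B_{Q|_H})=\{0,1\}$, so $r=1$; here $Q|_H$ vanishes on its radical iff $Q(1)=0$, that is iff $n\equiv 0\pmod 4$. Hence for $n\equiv 2\pmod 4$ the form is non-zero on its radical and the count is exactly $2^{n-2}$, so $e=0$ (the cases $n\equiv 2,6\pmod 8$), while for $n\equiv 0\pmod 4$ the count is $2^{n-2}\pm 2^{(n-2)/2}$. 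Thus the magnitude of $e$ in every case, and the vanishing $e=0$ for $n\equiv 2\pmod 4$, drop out immediately, and the only remaining content is the four signs.

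To pin down the signs I would pass to a Gauss-sum computation on the full field. Writing $N(Q|_H(x)=0)=2^{n-2}+e$ and using that $\tfrac12\bigl(1+(-1)^{\Tr(x)}\bigr)$ is the indicator of $H$, one gets $e=\tfrac14(S_0+S_1)$ with $S_0=\sum_{x\in\F_{2^n}}(-1)^{Q(x)}$ and $S_1=\sum_{x\in\F_{2^n}}(-1)^{Q(x)+\Tr(x)}$. Since $\Tr$ is linear, $Q$ and $Q+\Tr$ share the bilinear form $B_Q(x,y)=\Tr(xy)+\Tr(x)\Tr(y)$ computed in Lemma~\ref{lem:radical}, whose radical is $\{0\}$ for $n$ even and $\{0,1\}$ for $n$ odd; evaluating $Q$ and $Q+\Tr$ at $1$ again through $Q(1)$ and $\Tr(1)=n\bmod 2$ determines which of $S_0,S_1$ vanish and the common magnitude of the survivors. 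What remains is exactly the sign of each non-vanishing Gauss sum, i.e.\ the Arf invariant of $Q$ and of $Q+\Tr$ on $\F_{2^n}$.

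This last point is the main obstacle. Everything above is purely formal and uses only Lemma~\ref{lem:radical}; by contrast the Arf invariants of the Dickson-type forms $Q(x)=\sum_{0\le i<j<n}x^{2^i+2^j}$ and $Q(x)+\Tr(x)$ are genuine arithmetic invariants, and it is precisely their values that produce the period-$8$ dependence on $n$. I would compute them either by exhibiting an explicit Witt (symplectic) decomposition of $\F_{2^n}$ for $B_Q$ and summing the local contributions $Q(u_i)Q(v_i)$ over the hyperbolic pairs, or by invoking the explicit evaluation of these sums carried out in the study of irreducible polynomials with prescribed second and third coefficients \cite{prescr1,prescr2,prescr3}; matching the resulting signs against $n\bmod 8$ then yields the stated table for $e$.
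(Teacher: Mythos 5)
Your proposal is correct as far as it goes, but note that the paper does not actually prove Theorem~\ref{thm:solutions}: it presents the statement as a summary of results from the literature on irreducible polynomials with prescribed coefficients \cite{prescr1,prescr2,prescr3}, and only the cases $n \equiv 2,6 \pmod 8$ receive a self-contained justification (in the remark following Theorem~\ref{thm:class}, via the observation that $Q|_H$ is then parabolic). Your route is therefore genuinely different from, and more informative than, the paper's treatment. Your formal steps all check out: the zero-counting formula combined with Lemma~\ref{lem:radical} and the value $Q(1)=n(n-1)/2 \bmod 2$ correctly pins down $|e|$ in every case and forces $e=0$ for $n\equiv 2\pmod 4$, which subsumes the paper's remark; the identity $e=\frac14(S_0+S_1)$ is right (the cross term $\sum_x(-1)^{\Tr(x)}$ vanishes); $Q$ and $Q+\Tr$ indeed share the bilinear form $B_Q$, whose radical on the full field is $\{0\}$ for $n$ even and $\{0,1\}$ for $n$ odd, so your bookkeeping of which of $S_0,S_1$ vanish (exactly one for $n$ odd, neither for $n$ even) and their common magnitude is correct. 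This cleanly isolates the only remaining content as the signs of the surviving Gauss sums, i.e.\ the Arf invariants of $Q$ and $Q+\Tr$ on $\F_{2^n}$ as functions of $n \bmod 8$. That is precisely where the genuine arithmetic lives, and your Witt-decomposition alternative for computing it is left unexecuted; so as written your proof is complete only via the fallback citation to \cite{prescr1,prescr2,prescr3}. Since that is the very citation the paper leans on for the entire theorem, your argument is at least as complete as the paper's, and it buys something the paper does not offer: a self-contained derivation of the magnitudes and of the $n\equiv 2,6 \pmod 8$ cases, reducing the imported content from the full count to four signs.
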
 

Two quadratic forms $f$ and $g$ on a vector space $V$ are called equivalent if $f$ can be transformed into $g$ with a non-singular linear transformation of $V$.
	The following result is well known (see e.g. \cite{LN,Hou}).
	
\begin{theorem}[Classification of quadratic forms]\label{thm:class}
	Let $f \colon V \rightarrow \F_2$ with $\dim(V)=n$ be a quadratic form with $\dim(\rad(f))=w$.
	Then $f$ is equivalent to one of three forms:
	\begin{align*}
		f &\simeq \sum_{i=1}^v x_iy_i & \text{(hyperbolic case)}\\
		f &\simeq z+\sum_{i=1}^v x_iy_i & \text{(parabolic case)}\\
		f &\simeq x_1^2+x_1y_1+y_1^2+\sum_{i=2}^v x_iy_i & \text{(elliptic case)},
	\end{align*}
	where $v=\lfloor (n-w)/2 \rfloor$. \\

	The value of $N(f(x)=0)$ depends only on $n$, $w$ and the type of the quadratic form. More precisely,
	\begin{equation*}
		N(f(x)=0) = 2^{n-1}+\Lambda(f)2^{\frac{n+w-2}{2}},
	\end{equation*}
	with 
	\begin{equation*}
		\Lambda(f)=\begin{cases}
			1, &\text{ if }f \text{ is hyperbolic}\\
			0,  &\text{ if }f \text{ is parabolic}\\
			-1, & \text{ if }f \text{ is elliptic}.
		\end{cases}
	\end{equation*}

\end{theorem}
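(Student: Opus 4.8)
The plan is to first establish the structural normal-form part of the statement and then read off the counting formula from each normal form. Throughout I work with the associated bilinear form $B_f(x,y)=f(x+y)+f(x)+f(y)$, which in characteristic $2$ is alternating, i.e.\ symmetric with $B_f(x,x)=0$. The key observation is that on the radical $R:=\rad(B_f)$ the form $f$ is additive, $f(x+y)=f(x)+f(y)$ for $x,y\in R$, so $f|_R\colon R\to\F_2$ is $\F_2$-linear; by definition $\rad(f)=R\cap f^{-1}(0)=\ker(f|_R)$. This splits the argument into two cases: either $f|_R=0$, in which case $\rad(f)=R$ and $w=\dim R$; or $f|_R\ne 0$, in which case $\rad(f)=\ker(f|_R)$ has dimension $\dim R-1$ and $w=\dim R-1$.

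Next I would pick a complement $U$ of $R$ in $V$, so that $B_f$ restricted to $U$ is a non-degenerate alternating form. Such a form forces $\dim U$ to be even and admits a symplectic basis $x_1,y_1,\dots,x_{v'},y_{v'}$ with $B_f(x_i,y_j)=\delta_{ij}$ and $B_f(x_i,x_j)=B_f(y_i,y_j)=0$; this decomposes $U$ orthogonally into hyperbolic planes $\langle x_i,y_i\rangle$. On each such plane $f$ restricts to a form with $B_f(x_i,y_i)=1$, hence is equivalent to either $x_iy_i$ or to the anisotropic form $x_i^2+x_iy_i+y_i^2$. The crucial reduction is the identity that two anisotropic planes combine into two hyperbolic planes, i.e.\ $(x_1^2+x_1y_1+y_1^2)+(x_2^2+x_2y_2+y_2^2)\simeq x_1y_1+x_2y_2$; this shows that in the non-degenerate even-dimensional case only the parity of the number of anisotropic planes — the Arf invariant — matters, yielding exactly the hyperbolic and elliptic normal forms. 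Assembling the pieces: if $f|_R=0$ then $f\simeq f|_U$ (hyperbolic or elliptic) with $v=v'=(n-w)/2$; if $f|_R\ne 0$ then $R$ contributes a single free linear coordinate $z$, giving $f\simeq z+f|_U$, and since a free linear term can absorb a single anisotropic plane, the hyperbolic and elliptic cases coalesce into the unique parabolic form with $v=(n-w-1)/2=\lfloor (n-w)/2\rfloor$.

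With the normal forms in hand, the count is an elementary induction. The radical contributes a free factor: since $f$ vanishes on a $w$-dimensional complementary space in the hyperbolic and elliptic cases, and on the $w$-dimensional piece $\rad(f)$ in the parabolic case, one gets $N(f=0)=2^{w}\cdot N_0$, where $N_0$ counts the zeros of the non-degenerate core. For a hyperbolic core $\sum_{i=1}^{v}x_iy_i$ an induction on $v$ (base case $xy=0$ having $3$ solutions) gives $2^{2v-1}+2^{v-1}$; replacing one plane by the anisotropic form flips the sign, yielding $2^{2v-1}-2^{v-1}$ for the elliptic core; and in the parabolic case the free variable $z$ makes $f$ balanced, so $N_0=2^{(n-w)-1}$. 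Substituting $2v=n-w$ in the first two cases gives $N(f=0)=2^{n-1}\pm 2^{(n+w-2)/2}$ and $2^{n-1}$ in the parabolic case, which is exactly the claimed formula with $\Lambda\in\{1,-1,0\}$.

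I expect the main obstacle to be the classification step rather than the counting: specifically, proving that the symplectic-plane decomposition together with the two-anisotropic-planes identity leaves precisely the three stated types, and correctly tracking how the linear part arising from $\rad(B_f)$ (the case $f|_R\ne 0$) forces the parabolic normal form and absorbs the Arf invariant. Once the normal forms are secured, the characteristic-$2$ bookkeeping — that $B_f$ is alternating and that $\rad(f)=\ker(f|_R)$ — is routine, and the zero-count follows by the short induction sketched above.
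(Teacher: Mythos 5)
Your proof is correct, but it is worth noting that the paper does not prove this statement at all: it is quoted as a classical result with pointers to the literature (\cite{LN,Hou}), so there is no ``paper proof'' to match. What you supply is the standard self-contained argument, and it holds together. The case split on whether $f$ vanishes on $R=\rad(B_f)$ is exactly right: $f|_R$ is linear because $B_f$ vanishes on $R$, so either $w=\dim R$ (and $n-w$ is even, giving the hyperbolic/elliptic dichotomy via the Arf invariant) or $w=\dim R-1$ (and the surviving linear coordinate $z$ forces the parabolic form). Your counting also checks out: the radical splits off as a multiplicative factor $2^w$ because $B_f(r,u)=0$ makes $f(r+u)=f(r)+f(u)$, and the induction $N_v=2N_{v-1}+4^{v-1}$ with base $3=2^1+2^0$ (resp.\ $1=2^1-2^0$ for the anisotropic plane) yields $2^{2v-1}\pm 2^{v-1}$, which after multiplying by $2^w$ and substituting $2v=n-w$ gives precisely $2^{n-1}+\Lambda(f)2^{(n+w-2)/2}$. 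Two steps that you assert deserve one line each in a full write-up: the identity $(x_1^2+x_1y_1+y_1^2)\perp(x_2^2+x_2y_2+y_2^2)\simeq x_1y_1\perp x_2y_2$ (standard, and independently confirmed by your zero counts, $1\cdot 1+3\cdot 3=3\cdot 3+1\cdot 1=10$), and the absorption of an anisotropic plane by the free linear term, which tacitly uses that over $\F_2$ one classifies forms as \emph{functions}, so $x_1^2+x_1y_1+y_1^2=x_1+y_1+x_1y_1$ pointwise and the invertible substitution $z\mapsto z+x_1+y_1$ turns $z+x_1^2+x_1y_1+y_1^2$ into $z+x_1y_1$; since the paper's setting is exactly forms $f\colon V\to\F_2$ up to invertible linear substitution, this is legitimate, but it should be said explicitly because as homogeneous polynomials the two forms are not equal. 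With those two remarks filled in, your argument is a complete proof of both the normal-form classification and the zero-count formula, which is more than the paper offers for this theorem.
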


The \emph{Witt index} of a quadratic form is the number of pairs $x_iy_i$ that appear in the decomposition described above. In particular, the Witt index of $f$ is $v$ in the hyperbolic and parabolic case, and $v-1$ in the elliptic case.

\begin{remark}
Just using the classification of quadratic forms in Theorem~\ref{thm:class} and the determination of the radical in Lemma~\ref{lem:radical} we can give a simple alternative proof of the cases $n \equiv 2,6 \pmod 8$ in Theorem~\ref{thm:solutions}. Indeed, in these cases $Q|_H$ is necessarily parabolic which immediately gives the value for $N(Q|_H(x)=0)$.
\end{remark}

{ We are now interested in the maximal dimension of a subspace contained in a quadric. Let $f$ be
a quadratic form on $V$.
A  subspace $W$ of $V$ is called totally isotropic if  $f(w)=0$ for all $w \in W$. And a subspace 
$W$ is called maximal totally isotropic
 if there is no subspace $W_2$ with $f(w)=0$ for all $w \in W_2$ and $W \subsetneq W_2 \subseteq V$.
Any two maximal 
 totally isotropic subspaces have the same dimension, which  {is the sum of  the Witt index
and the dimension of the radical of
the quadratic form,} as the following result implies.}

\begin{proposition}[{\cite[Corollary 4.4.]{quadforms}}] \label{prop:singular}
	Let $f \colon V \rightarrow \F_2$ be a non-degenerate quadratic form on a vector space $V$ over $\F_2$ with $\dim(V)=n$. Let $W$ be a maximal totally isotropic subspace of $V$. Then, the dimension of $W$ is equal to the Witt index of $f$. In particular, we have
	\begin{equation*}
		\dim(W) = \begin{cases}
				\frac{n}{2}, &\text{ if }f \text{ is hyperbolic} \\
				\frac{n-1}{2}, &\text{ if }f \text{ is parabolic} \\
				\frac{n-2}{2}, &\text{ if }f \text{ is elliptic}.
		\end{cases}
	\end{equation*}
\end{proposition}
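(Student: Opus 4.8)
The plan is to reduce to the canonical forms of Theorem~\ref{thm:class} and then bound $\dim W$ from below and above separately. Since $f$ is non-degenerate we have $\dim \rad(f)=0$, so $w=0$ and $v=\lfloor n/2\rfloor$ in the classification, and $f$ is equivalent to exactly one of the hyperbolic, parabolic, or elliptic normal forms. Because both the dimension of a maximal totally isotropic subspace and the Witt index are invariant under non-singular linear transformations, it suffices to prove the claim for these explicit forms.

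For the lower bound I would simply exhibit a totally isotropic subspace of the claimed dimension. In the hyperbolic form $\sum_{i=1}^v x_iy_i$ the subspace cut out by $y_1=\dots=y_v=0$ is totally isotropic of dimension $v=n/2$; in the parabolic case the analogous subspace inside the hyperbolic part, with the parabolic coordinate also set to $0$, has dimension $v=(n-1)/2$; in the elliptic case setting $x_1=y_1=0$ together with $y_2=\dots=y_v=0$ gives a totally isotropic subspace of dimension $v-1=(n-2)/2$. As every totally isotropic subspace lies in a maximal one and all maximal ones share a common dimension, this yields $\dim W\ge$ (Witt index).

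For the upper bound the key observation is that a totally isotropic $W$ is automatically isotropic for the alternating form $B_f$, since $B_f(w,w')=f(w+w')+f(w)+f(w')=0$ for all $w,w'\in W$. I would first record that $\dim\rad(B_f)\le 1$: the restriction $f|_{\rad(B_f)}$ is additive with kernel $\rad(f)=\{0\}$, and its dimension has the same parity as $n$ because an alternating form has even rank. Passing to $V/\rad(B_f)$, where $B_f$ induces a non-degenerate symplectic form, an isotropic subspace has dimension at most $\lfloor (n-\dim\rad(B_f))/2\rfloor$. For $n$ even this gives $\dim W\le n/2$, matching the hyperbolic bound; for $n$ odd the generator $r$ of $\rad(B_f)$ satisfies $f(r)\ne 0$ (else $r\in\rad(f)$), so $r\notin W$ and $W\cap\rad(B_f)=\{0\}$, giving $\dim W\le (n-1)/2$, the parabolic bound.

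The one case where this symplectic bound is not tight, and which I expect to be the main obstacle, is the elliptic case: there $B_f$ is non-degenerate and the argument above only gives $\dim W\le n/2$, whereas the Witt index is $(n-2)/2$. To close the gap I would show that a totally $f$-isotropic subspace of dimension $n/2$ (a $B_f$-Lagrangian) can exist only when $f$ is hyperbolic. Assuming $\dim W=n/2$, choose a Lagrangian complement and a symplectic basis $e_1,\dots,e_v,e_1',\dots,e_v'$ with $W=\langle e_1,\dots,e_v\rangle$, so $f(e_i)=0$; replacing each $e_i'$ by $e_i'+f(e_i')e_i$ keeps the basis symplectic and makes every $e_i'$ isotropic as well, whence $f$ takes the hyperbolic form $\sum_i x_iy_i$. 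Since the elliptic form is inequivalent to the hyperbolic one (they are already distinguished by the value $N(f=0)$ in Theorem~\ref{thm:class}), this is a contradiction, forcing $\dim W\le (n-2)/2$ and finishing the elliptic case. The bulk of the work is this final normal-form manipulation; the hyperbolic and parabolic cases follow immediately from the symplectic isotropy bound.
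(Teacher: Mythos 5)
The paper itself gives no proof of this proposition---it is quoted from \cite{quadforms}---so your argument has to stand on its own. Most of it does: the reduction to the normal forms of Theorem~\ref{thm:class} (with $w=0$, so $n$ even forces hyperbolic or elliptic and $n$ odd forces parabolic), the explicit totally isotropic subspaces realizing the claimed dimensions, the observation that $f$ restricted to $\rad(B_f)$ is additive with trivial kernel so $\dim\rad(B_f)\le 1$ with parity matching $n$, the symplectic isotropy bound, and the elliptic-case argument that a totally $f$-isotropic subspace of dimension $n/2$ would be a $B_f$-Lagrangian and the correction $e_i'\mapsto e_i'+f(e_i')e_i$ would put $f$ in hyperbolic form, contradicting the inequivalence of the hyperbolic and elliptic types (correctly certified by the counts $N(f(x)=0)$ in Theorem~\ref{thm:class}). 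These steps correctly establish that the \emph{maximum} dimension of a totally isotropic subspace equals the Witt index, which is in fact all that the paper uses downstream in Proposition~\ref{prop:mod16}.

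There is, however, a genuine gap with respect to the statement as given. The proposition asserts that \emph{every inclusion-maximal} totally isotropic subspace $W$ has dimension equal to the Witt index, and your lower bound for such a $W$ is obtained by invoking ``all maximal ones share a common dimension.'' You never prove this equidimensionality; it is precisely the nontrivial content that the paper says the cited result supplies (see the sentence immediately preceding Proposition~\ref{prop:singular}), and in the literature it rests on Witt's extension theorem---so as written the step is circular. Nothing in your argument rules out an inclusion-maximal totally isotropic subspace of dimension strictly below the Witt index. The repair is a short extension lemma, and you already have the needed tool: the same pairing-and-correcting manipulation from your elliptic case shows that any totally isotropic $U$ of dimension $m$ splits off $m$ hyperbolic planes, i.e. $V\cong H^{\perp m}\perp V'$ with $f|_{V'}$ a form of the same type on $U^{\perp}/U$ (the quotient form is well defined since $f(x+u)=f(x)$ for $x\in U^{\perp}$, $u\in U$); the residual form has a nonzero isotropic vector unless it is trivial, the one-dimensional form $z$, or the anisotropic plane $x_1^2+x_1y_1+y_1^2$---exactly the cases $m=$ Witt index---so any smaller totally isotropic subspace extends and is not maximal. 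With that lemma inserted, your proof is complete.
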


{We collect the above observations to give a {sharp} upper bound on the size of vector spaces that consist of elements with Kloosterman sum divisible by $16$.}

\begin{proposition}\label{prop:mod16}
	Let $W$ be a subspace of $\F_{2^n}$ with $K_{n}(w)\equiv 0 \pmod{16}$ for all $w \in W$ and $n\geq 5$. Then $\dim W \leq d$ where
	\begin{equation*}
		d = \begin{cases}
			\frac{n-2}{2}, &n \equiv 0,2,6\pmod 8 \\
			\frac{n-1}{2}, &n \equiv 1,7\pmod 8 \\
			\frac{n-3}{2}, &n \equiv 3,5\pmod 8\\
			\frac{n}{2}, &n \equiv 4\pmod 8.
		\end{cases}
	\end{equation*}
	The bounds are sharp.
\end{proposition}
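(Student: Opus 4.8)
The plan is to reduce the statement to a question about totally isotropic subspaces of the quadratic form $Q|_H$, and then read off the answer from the classification of quadratic forms. By Theorem~\ref{thm:faruk}, for $w \in \F_{2^n}$ we have $K_n(w) \equiv 0 \pmod{16}$ if and only if $\Tr(w)=0$ and $Q(w)=0$, i.e. if and only if $w \in H$ and $Q|_H(w)=0$. Hence a subspace $W \subseteq \F_{2^n}$ satisfies $K_n(w)\equiv 0 \pmod{16}$ for all $w \in W$ precisely when $W \subseteq H$ and $W$ is totally isotropic for $Q|_H$. Therefore the maximal such $\dim W$ equals the dimension of a maximal totally isotropic subspace of $Q|_H$. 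This already settles sharpness: a maximal totally isotropic subspace of $Q|_H$ is itself an admissible $W$, so the bound is attained.

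By the discussion preceding Proposition~\ref{prop:singular}, the dimension of a maximal totally isotropic subspace of $Q|_H$ equals the Witt index of $Q|_H$ plus $\dim \rad(Q|_H)$; concretely, one passes to the non-degenerate form induced on $H/\rad(Q|_H)$, applies Proposition~\ref{prop:singular} there, and adds back the radical. Lemma~\ref{lem:radical} already supplies $w := \dim \rad(Q|_H)$, namely $w=1$ for $n \equiv 0 \pmod 4$ and $w=0$ otherwise. It then remains to identify the type (hyperbolic, parabolic or elliptic) of $Q|_H$.

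To determine the type I would compare the count from Theorem~\ref{thm:solutions} with the general formula of Theorem~\ref{thm:class}, applied to $Q|_H$ on the $(n-1)$-dimensional space $H$. The latter gives $N(Q|_H(x)=0) = 2^{n-2} + \Lambda(Q|_H)\, 2^{(n+w-3)/2}$, so matching with $N(Q|_H(x)=0) = 2^{n-2} + e$ determines $\Lambda(Q|_H)$, hence the type, in each residue class of $n \bmod 8$: hyperbolic for $n \equiv 1,4,7$, parabolic for $n \equiv 2,6$, and elliptic for $n \equiv 0,3,5 \pmod 8$. (In the parabolic cases $\Lambda(Q|_H)=0$, so the half-integer exponent there is harmless.)

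Finally I would combine these data. Writing $v = \lfloor (n-1-w)/2 \rfloor$ for the parameter of Theorem~\ref{thm:class}, the Witt index is $v$ in the hyperbolic and parabolic cases and $v-1$ in the elliptic case, and the required dimension is (Witt index)$\,+\,w$. Running through the five residue classes of $n \bmod 8$ then yields exactly the claimed values of $d$. The main point requiring care is bookkeeping: keeping track of the dimension shift (the ambient space is $H$ of dimension $n-1$, not $\F_{2^n}$) and of the contribution of the one-dimensional radical in the cases $n \equiv 0,4 \pmod 8$, where $1 \in \rad(Q|_H)$ lets a totally isotropic subspace exceed the naive Witt index by one.
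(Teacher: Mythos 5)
Your proof is correct and follows essentially the same route as the paper: reduce via Theorem~\ref{thm:faruk} to totally isotropic subspaces of $Q|_H$, identify the type of $Q|_H$ by matching Theorem~\ref{thm:solutions} against the counting formula of Theorem~\ref{thm:class}, then combine Lemma~\ref{lem:radical} with Proposition~\ref{prop:singular}, adding back the one-dimensional radical when $n \equiv 0,4 \pmod 8$. If anything, you are slightly more complete than the paper's proof, since you make the sharpness explicit (a maximal totally isotropic subspace of $Q|_H$ is itself admissible because Theorem~\ref{thm:faruk} is an equivalence), whereas the paper asserts sharpness without spelling this out.
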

\begin{proof}
From the Theorems~\ref{thm:solutions} and \ref{thm:class} we deduce that $Q|_H$ is elliptic if $n \equiv 0,3,5 \pmod 8$, hyperbolic if $n \equiv 1,4,7 \pmod 8$ and parabolic if $n \equiv 2,6 \pmod 8$. 
	In the cases $n \not\equiv 0,4 \pmod 8$ the quadratic form $Q|_H$ is non-degenerate by Lemma~\ref{lem:radical} and we immediately get bounds on $\dim(W)$ from Proposition~\ref{prop:singular} (recall that $Q|_H$ is a quadratic form on an $(n-1)$ dimensional space). If $n \equiv 0,4 \pmod 8$ then $\dim(\rad(Q|_H))=1$, so $\dim V \leq 1+\frac{n-4}{2}=\frac{n-2}{2}$ if $n \equiv 0 \pmod 8$ and $\dim V \leq 1+\frac{n-2}{2}=\frac{n}{2}$ if $n \equiv 4 \pmod 8$.
	\qed
\end{proof}

\begin{remark} {Every vector space $W$ that contains exclusively Kloosterman zeros is of course also a vector space that contains only Kloosterman sums divisible by $16$. In particular, by Propositions~\ref{prop:singular} and \ref{prop:mod16}, all vector spaces of Kloosterman zeros are necessarily contained in a {maximal
totally isotropic} vector space of $Q|_H$. However, these vector spaces are generally not unique.}
\end{remark}

{ Using Proposition~\ref{prop:mod16}, we get the following result.}

\begin{theorem} \label{thm:kloozeros}
	Let $W$ be a subspace of $\F_{2^n}$ such that $K_{n}(v)=0$ for all $v \in W$ and $n\geq 5$. Then $\dim W \leq d$ where
	\begin{equation*}
		d = \begin{cases}
			\frac{n-2}{2}, &n \equiv 0,2,4,6\pmod 8 \\
			\frac{n-1}{2}, &n \equiv 1,7\pmod 8 \\
			\frac{n-3}{2}, &n \equiv 3,5\pmod 8.
		\end{cases}
	\end{equation*}

\end{theorem}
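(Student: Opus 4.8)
The plan is to obtain almost the entire statement for free from Proposition~\ref{prop:mod16}. Every Kloosterman zero $v$ satisfies $K_n(v)\equiv 0\pmod{16}$, so any subspace $W$ consisting of Kloosterman zeros is in particular a subspace on which the Kloosterman sum is divisible by $16$; hence Proposition~\ref{prop:mod16} applies and bounds $\dim W$. Comparing the bound $d$ claimed here with the bound of Proposition~\ref{prop:mod16} in each residue class, the two agree in every case except $n\equiv 4\pmod 8$, where Proposition~\ref{prop:mod16} yields only $\dim W\le n/2$ whereas we claim $\dim W\le (n-2)/2$. So the whole theorem reduces to improving the bound by one in the single case $n\equiv 4\pmod 8$.

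Fix therefore $n\equiv 4\pmod 8$ with $n\ge 5$ (equivalently $n\ge 12$) and suppose for contradiction that $\dim W=n/2$. By Theorem~\ref{thm:faruk} we have $W\subseteq H$ and $Q$ vanishes on $W$, so $W$ is a totally isotropic subspace of $Q|_H$. By Lemma~\ref{lem:radical} the form $Q|_H$ is degenerate with radical $\{0,1\}$, and (as in the proof of Proposition~\ref{prop:mod16}) its maximal totally isotropic subspaces have dimension $n/2$. Since $\dim W=n/2$ is the maximum possible, $W$ is a maximal totally isotropic subspace. For $w\in W$ and $r\in\rad(Q|_H)$ we have $Q|_H(w+r)=Q|_H(w)+Q|_H(r)+B_{Q|_H}(w,r)=0$, so $W+\rad(Q|_H)$ is again totally isotropic and maximality forces $\rad(Q|_H)\subseteq W$. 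Thus $1\in W$, and since $W$ consists of Kloosterman zeros we would need $K_n(1)=0$.

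It remains to show $K_n(1)\ne 0$ for $n\equiv 4\pmod 8$, $n\ge 12$, which is the crux of the argument. I would invoke the classical description of Kloosterman sums with parameter in the prime field: if $\alpha,\beta$ denote the reciprocal roots of the associated $L$-function, then $\alpha\beta=2$ and $\alpha+\beta=-1$ (the latter by evaluating at $n=1$), so that $\sum_{x\in\F_{2^n}^*}(-1)^{\Tr(x^{-1}+x)}=-(\alpha^n+\beta^n)$. Adding the $x=0$ term and eliminating $\alpha,\beta$ gives the integer recurrence
\begin{equation*}
K_n(1)=4-K_{n-1}(1)-2K_{n-2}(1),\qquad K_1(1)=2,\ K_2(1)=4 .
\end{equation*}
Reducing this recurrence modulo $32$, a direct computation of the first terms shows that the sequence $\bigl(K_n(1)\bmod 32\bigr)$ is periodic of period $8$ from $n=5$ on, and one reads off $K_n(1)\equiv 16\pmod{32}$ whenever $n\equiv 4\pmod 8$ and $n\ge 12$; in particular $K_n(1)\ne 0$. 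This contradicts $K_n(1)=0$ from the previous paragraph, so $\dim W\le n/2-1=(n-2)/2$, completing the case and the theorem.

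The main obstacle is exactly this last divisibility statement. A nontrivial radical of $Q|_H$ occurs precisely when $n\equiv 0,4\pmod 8$, but in the subcase $n\equiv 0\pmod 8$ the mod-$16$ bound already equals the desired one, so no arithmetic input on $K_n(1)$ is needed there. The delicate point is that $1$ is a genuine Kloosterman zero exactly for the excluded value $n=4$ (indeed $K_4(1)=0$), which is why the argument must use a modulus finer than $16$ — here modulo $32$ — to separate $n=4$ from the remaining members $n=12,20,28,\dots$ of the residue class.
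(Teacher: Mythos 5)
Your proof is correct, but it handles the one nontrivial case, $n\equiv 4\pmod 8$, by a genuinely different route than the paper. Like you, the paper derives everything except that case directly from Proposition~\ref{prop:mod16}. For $n\equiv 4\pmod 8$ the paper uses a short subfield argument (attributed to a referee): for even $n=2k$, the result of \cite{zeros} says no non-zero element of the subfield $\F_{2^k}$ is a Kloosterman zero, and since both $W$ and $\F_{2^k}$ lie in the hyperplane $H$ and intersect trivially, a dimension count gives $\dim W\le (n-1)-k=(n-2)/2$. Your argument instead stays inside the geometry of the quadric: you correctly observe that a totally isotropic subspace of the maximal dimension $n/2$ must absorb the radical $\{0,1\}$ of $Q|_H$ (your computation $Q(w+r)=Q(w)+Q(r)+B_{Q|_H}(w,r)=0$ is right, and $Q(1)=0$ holds since $n\equiv 0\pmod 4$), forcing $1\in W$; you then rule this out by showing $K_n(1)\ne 0$. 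Your recurrence is valid: from $\alpha\beta=2$, $\alpha+\beta=-1$ one gets $K_n(1)=4-K_{n-1}(1)-2K_{n-2}(1)$ with $K_1(1)=2$, $K_2(1)=4$, and the sequence mod $32$ is indeed $(12,24,20,0,28,8,4,16)$ repeating with period $8$ from $n=5$, giving $K_n(1)\equiv 16\pmod{32}$ for $n\equiv 4\pmod 8$, $n\ge 12$ (I verified $K_{12}(1)=48$). The trade-offs: your proof is more self-contained, replacing the citation of \cite{zeros} by an elementary Carlitz-type recurrence, and it pinpoints exactly why $n=4$ is exceptional ($K_4(1)=0$, so a modulus finer than $16$ is unavoidable); the paper's argument is shorter and yields the improved bound uniformly for all even $n$ rather than only where needed. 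Note also that your final arithmetic step could be obtained for free from the same reference the paper uses: $1\in\F_2$ lies in a proper subfield, so \cite{zeros} already gives $K_n(1)\ne 0$ for $n>4$ — your mod-$32$ computation is an elementary substitute for that citation, not a necessity.
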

\begin{proof} {
	The bound follows from Proposition~\ref{prop:mod16} for all cases except $n \equiv 4 \pmod 8$.
	In the latter case the bound of Proposition ~\ref{prop:mod16} can be improved by one using the following
	observation for even $n$.
	\footnote[1]{This is due to an anonymous referee.}
	Let $n=2k$ be even. As noted in \cite{zeros}, there are no non-zero Kloosterman zeros in the  subfield $\F_{2^{k}}$. We have $\F_{2^k}\subset H$,  $W\subset H$ and $W \cap \F_{2^k}=\{0\}$, implying $\dim(V)\leq \frac{n-2}{2}$.   }
		\qed
\end{proof}

{We would like to mention that the following approach yields a slightly weaker bound than the one given in Theorem~\ref{thm:kloozeros}. } The following identity for sums of Kloosterman sums over a vector space was given in \cite[Proposition 3]{charp_kloos}: For any subspace $V$ of $\F_{2^n}$ with $\dim(V)=k$ we have
\begin{equation*}
	\sum_{a \in V}(K_n^2(a)-K_n(a))=2^{n+k}-2^{n+1}+2^k\sum_{u \in V^{\perp}}K_n({u^{-1}}).
\end{equation*}
If $V$ contains exclusively Kloosterman zeros, we get 
\begin{equation*}
	0=2^{n+k}-2^{n+1}+2^k\sum_{u \in V^{\perp}}K_n(u^{-1}),
\end{equation*}
recall we set $0^{-1} =0$. Bounding the Kloosterman sum in the right hand side of the equation using the Weil bound $|K_n(a)|\leq 2^{\frac{n}{2}+1}$, we get
\begin{equation*}
	0\geq 2^{n+k}-2^{n+1}-2^k2^{n-k}2^{\frac{n}{2}+1}=2^{n+k}-2^{n+1}-2^{\frac{3n}{2}+1}.
\end{equation*}
This shows that $k=\dim(V)\leq \frac{n}{2}+1$ for $n\geq 3$. \\

{
\begin{remark}\label{rem:bound}
Theorem~\ref{thm:kloozeros} provides to our knowledge the first general upper bound on the maximal size of subspaces of Kloosterman zeros. However, experimental results indicate that our bound is weak, see 
Table~\ref{t:zeros}. Our bound is sharp for very small $n$ (see right table in Table~\ref{t:zeros}), which is not surprising since the approximation modulo $16$ is strong for small $n$. Numerics in Table~\ref{t:zeros} were computed using \cite{goppa,mth-pl} for the  left table and \cite{perrin} for the right
table. The left table shows that the total number of Kloosterman zeros in the field $\F_{2^n}$ is close to $2^{n/2}$ for $n\leq 60$. It is of course not to expect that the set of Kloosterman zeros has a strong additive structure, so we believe that the bound of Theorem~\ref{thm:kloozeros} can be significantly improved. 
\end{remark} }
\begin{table}[ht]
	\centering
	\begin{tabular}{ ||c|c|| } 
	 \hline
		$n$ & $2^{\frac{-n}{2}} \mathcal{Z}(n)$\\
	 \hline  \hline
	5 & 0.88 \\
	 10 & 1.87 \\
	 15 & 1.57 \\
	20 & 0.86\\
	25 & 0.67\\
	30 & 1.29 \\
	35 & 1.15 \\
	40 & 1.15 \\
	45 & 1.14 \\
	50 & 0.91 \\
	55 & 1.32 \\
	60 & 1.25 \\
		\hline
	\end{tabular}
	\quad\quad\quad\quad
\begin{tabular}{||c|c||}
	\hline
		$n$ & $\dim(V)$\\
 \hline  \hline
	 5 & 1 \\
	 6 & 2 \\
	7 & 3\\
	8 & 1\\
	9 & 1 \\
	10 & 2 \\
	11 & 2 \\
	12 & 2 \\
	13 & 1 \\
	14 & 3 \\
	15 & 4 \\
	16 & 2 \\
		\hline 
\end{tabular}
\vspace*{0.3cm}
	\caption{Left Table: Comparison of the number of Kloosterman zeros over $\F_{2^n}$ to the value $2^{n/2}$. Here, $\mathcal{Z}(n)$ denotes the number of Kloosterman zeros over $\F_{2^n}$.\protect\\ 
	Right table: the maximal dimension of a subspace $W$ of $\F_{2^n}$ that contains exclusively Kloosterman zeros.}
	\label{t:zeros}
\end{table}

\begin{problem}
	Find a better bound on the maximal size of a subspace containing exclusively Kloosterman zeros.
\end{problem}

\section{Permutations of the form $L_1(x^{-1})+L_2(x)$} \label{sec:last}

We now apply the results from the previous section. 
The following lemma is well-known. We include a simple proof of it for the convenience of the reader.

\begin{lemma}
	Let $L \colon \F_{2^n} \rightarrow \F_{2^n}$ be  linear and $L^*$ be its adjoint mapping. 
	Then $\dim(\im(L^*))=\dim(\im(L))$ and $\dim(\ker(L^*))=\dim(\ker(L))$.
\end{lemma}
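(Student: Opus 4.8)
The plan is to exploit the non-degeneracy of the trace form $\langle x,y\rangle = \Tr(xy)$ on $\F_{2^n}$, which guarantees that $\dim(A^\perp) = n - \dim(A)$ for every subspace $A \subseteq \F_{2^n}$, together with the rank-nullity theorem applied to both $L$ and $L^*$. The central identity I would establish first is
\begin{equation*}
\ker(L^*) = \im(L)^\perp.
\end{equation*}
To see this, note that $y \in \ker(L^*)$ means $L^*(y)=0$, which by non-degeneracy of the trace form is equivalent to $\Tr(xL^*(y))=0$ for all $x \in \F_{2^n}$. By the defining property of the adjoint, $\Tr(xL^*(y)) = \Tr(L(x)y)$, so this holds for all $x$ precisely when $\Tr(zy)=0$ for every $z \in \im(L)$, i.e. when $y \in \im(L)^\perp$.

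With this identity in hand, the dimension counts follow mechanically. Taking dimensions and using $\dim(\im(L)^\perp) = n - \dim(\im(L))$ gives
\begin{equation*}
\dim(\ker(L^*)) = n - \dim(\im(L)) = \dim(\ker(L)),
\end{equation*}
where the last equality is just the rank-nullity theorem for $L$. This settles the claim about kernels. For the images, I would then apply rank-nullity to $L^*$ itself:
\begin{equation*}
\dim(\im(L^*)) = n - \dim(\ker(L^*)) = n - \dim(\ker(L)) = \dim(\im(L)),
\end{equation*}
using the kernel equality just established followed by rank-nullity for $L$ once more.

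There is no genuine obstacle here; the only point requiring care is the non-degeneracy of the trace form, which underlies both the equality $\dim(A^\perp)=n-\dim(A)$ and the step $L^*(y)=0 \iff \Tr(xL^*(y))=0$ for all $x$. This is a standard property of the absolute trace on a finite field, so the whole argument reduces to a short sequence of routine identifications rather than anything substantive.
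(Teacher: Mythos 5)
Your proof is correct, and it takes a noticeably different route from the paper's. The paper proves only the one inclusion $\im(L^*)\subseteq\ker(L)^\perp$ (via the easy computation $\langle L^*(x),w\rangle=\langle x,L(w)\rangle=0$ for $w\in\ker(L)$), concludes $\dim(\im(L^*))\leq\dim(\im(L))$, and then gets the reverse inequality for free by running the same argument with $L^*$ in place of $L$ and invoking $L^{**}=L$; the kernel statement is then read off from rank--nullity. You instead establish the exact subspace identity $\ker(L^*)=\im(L)^\perp$ --- the finite-field trace-form analogue of the classical fact $\ker(A^T)=\im(A)^\perp$ --- in a single stroke, which requires explicitly invoking non-degeneracy of the trace form in the step $L^*(y)=0\iff\Tr(xL^*(y))=0$ for all $x$, and then both dimension equalities follow mechanically from $\dim(A^\perp)=n-\dim(A)$ and rank--nullity. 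The trade-off: your version yields a sharper intermediate fact and dispenses with the double-adjoint symmetry trick, at the cost of leaning on non-degeneracy twice; the paper's version only needs the trivial direction of the duality but must appeal to $L^{**}=L$ to close the loop. Note also that the paper's dimension count $\dim(\ker(L)^\perp)=\dim(\im(L))$ silently uses the same non-degeneracy you make explicit, so your write-up is arguably the more transparent of the two. Both arguments are complete and of comparable length.
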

\begin{proof}
	Let $v \in \im(L^*)$ and $w \in \ker(L)$. We can write $v=L^*(x)$ for some $x \in \F_{2^n}$. Then 
	$\langle v,w \rangle = \langle L^*(x),w \rangle = \langle x,L(w)\rangle = \langle x,0 \rangle =0$, so $\im(L^*)\subseteq\ker(L)^\perp$, in particular $\dim(\im(L^*)) \leq \dim(\im(L))$. The other inequality holds with $L^{**}=L$.
	
	The statement on the kernel follows from $\dim(\im(L))+\dim(\ker(L))=n$.
		\qed
\end{proof}

Corollary \ref{thm:chin-gen} shows that a function $L_1(x^{-1})+L_2(x)$ cannot be
bijective on $\F_{2^n}$ if at least one of $L_1$ or $L_2$ is bijective, equivalently has
a trivial kernel. The next result shows that such a function is not bijective also in the case
when the kernel of $L_1$ or $L_2$ is large.

\begin{theorem}
	Let $n\geq 5$ and $F(x)=L_1(x^{-1})+L_2(x)$ where $L_1$ and $L_2$ are 
	non-bijective non-zero linear functions of  $\F_{2^n}$. Further, let  $d$ be defined as in Theorem~\ref{thm:kloozeros}.
	If $\max(\dim(\ker(L_1)),\dim (\ker(L_2)))> d$, then $F$ does not permute $\F_{2^n}$.
	\end{theorem}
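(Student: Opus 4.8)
The plan is to assume for contradiction that $F$ permutes $\F_{2^n}$ and then to exhibit a subspace of Kloosterman zeros of dimension strictly larger than $d$, contradicting Theorem~\ref{thm:kloozeros}. The starting point is Corollary~\ref{cor:inverse}, which turns the permutation hypothesis into two simultaneous facts: the transversality condition $\ker(L_1^*)\cap\ker(L_2^*)=\{0\}$, and the vanishing $K_n(L_1^*(b)L_2^*(b))=0$ for every $b\in\F_{2^n}$. The second fact says that the entire set $\{L_1^*(b)L_2^*(b) : b\in\F_{2^n}\}$ consists of Kloosterman zeros, so the whole task reduces to locating a large \emph{linear} subspace inside this set of products. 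Since the adjoint lemma gives $\dim\ker(L_i^*)=\dim\ker(L_i)$, I may assume without loss of generality that $\dim\ker(L_1^*)=\dim\ker(L_1)=k>d$.

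Next I would parametrize the products to expose a subspace. Writing $b=b_1+b_2$ with $b_1\in\ker(L_1^*)$, the factor $L_1^*(b)=L_1^*(b_2)$ depends only on $b_2$, while $L_2^*(b)=L_2^*(b_1)+L_2^*(b_2)$. Fixing some $b_2$ with $L_1^*(b_2)=c\ne 0$ and letting $b_1$ range over $\ker(L_1^*)$, the products sweep out the affine coset $W+c\,L_2^*(b_2)$ of the subspace $W:=c\,L_2^*(\ker(L_1^*))$.

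The main obstacle is precisely that this naive parametrization produces an affine coset rather than a genuine subspace through the origin, whereas Theorem~\ref{thm:kloozeros} constrains only linear subspaces. The key observation that resolves it is that the constant term $c\,L_2^*(b_2)$ can be killed by a careful choice of $b_2$: since $L_2$ is non-bijective we have $\ker(L_2^*)\ne\{0\}$, and since $\ker(L_1^*)\cap\ker(L_2^*)=\{0\}$, every nonzero $b_2\in\ker(L_2^*)$ automatically lies outside $\ker(L_1^*)$. For such a $b_2$ we get $c=L_1^*(b_2)\ne 0$ while $L_2^*(b_2)=0$, so the products collapse to $L_1^*(b)L_2^*(b)=c\,L_2^*(b_1)$ as $b_1$ runs through $\ker(L_1^*)$. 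These fill exactly the subspace $W$ itself, now genuinely passing through the origin. I expect this choice of $b_2$ — trading the freedom in the complement for a vector that simultaneously meets $\ker(L_2^*)$ and avoids $\ker(L_1^*)$ — to be the crux of the argument.

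Finally I would carry out the dimension count. The restriction of $L_2^*$ to $\ker(L_1^*)$ has kernel $\ker(L_1^*)\cap\ker(L_2^*)=\{0\}$, hence is injective, and multiplication by the nonzero constant $c$ is a bijection of $\F_{2^n}$; therefore $\dim W=\dim\ker(L_1^*)=k>d$. As every element of $W$ has the form $L_1^*(b)L_2^*(b)$, it is a Kloosterman zero, so $W$ is a subspace of Kloosterman zeros of dimension exceeding $d$. This contradicts Theorem~\ref{thm:kloozeros} and shows that $F$ cannot permute $\F_{2^n}$.
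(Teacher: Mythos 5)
Your proposal is correct and takes essentially the same route as the paper: your chosen $b_2\in\ker(L_2^*)\setminus\{0\}$ is exactly the paper's element $c$, and your subspace $W=L_1^*(b_2)\cdot L_2^*(\ker(L_1^*))$ is precisely the paper's $V$, with the same injectivity argument via $\ker(L_1^*)\cap\ker(L_2^*)=\{0\}$ giving $\dim W>d$ and the contradiction with Theorem~\ref{thm:kloozeros}. The only cosmetic difference is the without-loss-of-generality step, which the paper justifies via the substitution $x\mapsto x^{-1}$ while you can rely on the symmetry of the product $L_1^*(b)L_2^*(b)$; either justification is fine.
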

\begin{proof}
	Observe that $F(x)$ is a permutation if and only if $F(x^{-1})=L_1(x)+L_2(x^{-1})$ is so. 
	Hence we may assume without loss of generality that $\dim (\ker(L_1))\geq \dim (\ker(L_2)) \geq 1$. 
	Suppose $F$ is a permutation. Then
by Corollary~\ref{cor:inverse} we have $\Ker(L_1^*)\cap\Ker(L_2^*)=\{0\}$ and \\$K_{n}(L_1^*(b)L_2^*(b))=0$ for all $b \in \F_{2^n}$.
	Set $e=\dim \ker L_1=\dim \ker L_1^*$. Choose $0\neq c \in \ker(L_2^*)$. The set 
	\begin{equation*}
		V=L_1^*(c+\ker(L_1^*))\cdot L_2^*(c+\ker(L_1^*))=L_1^*(c)\cdot L_2^*(\ker(L_1^*))
	\end{equation*}
	is a vector space that is contained in the image set of $L_1^*(b)L_2^*(b)$. In particular $K_n(v)=0$ for all $v \in V$. Since  $\Ker(L_1^*)\cap\Ker(L_2^*)=\{0\}$ we have $\dim(V)=e$.  Theorem~\ref{thm:kloozeros} then implies that $e \leq d$.
		\qed
\end{proof}

We conjecture that the following statements hold: \footnote{After the acceptance of this submission,
Lukas K\"olsch found a proof for Conjecture\,\ref{conj-open}.}
\begin{conjecture} \label{conj-open}
	Let $F=L_1(x^{-1})+L_2(x)$ where $L_1 \neq 0 $ and $L_2 \neq 0$ are linearized polynomials over $\F_{2^n}$ with $n\geq 5$. Then $F$ does not permute $\F_{2^n}$.
\end{conjecture}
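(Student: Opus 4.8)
The plan is to work entirely inside the framework of Corollary~\ref{cor:inverse}: assuming $F$ is a permutation, we must have $\ker(L_1^*)\cap\ker(L_2^*)=\{0\}$ and $K_n(L_1^*(b)L_2^*(b))=0$ for every $b\in\F_{2^n}$, so that the image set $S=\{L_1^*(b)L_2^*(b) : b\in\F_{2^n}\}$ consists entirely of Kloosterman zeros. Two normalizations come for free: replacing $F(x)$ by $F(x^{-1})$ interchanges the roles of $L_1$ and $L_2$, while the substitutions $x\mapsto cx$ and $F\mapsto cF$ rescale the two linear maps. Using the first I may assume $e_1:=\dim\ker(L_1^*)\ge e_2:=\dim\ker(L_2^*)\ge 1$, and since the theorem preceding Conjecture~\ref{conj-open} already rules out a permutation whenever $\max(e_1,e_2)>d$, I may furthermore assume $e_1\le d$, so that both kernels have moderate size.

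The first concrete step is to extract the algebraic content of Theorem~\ref{thm:faruk}. Every Kloosterman zero $s$ satisfies $\Tr(s)=0$ and $Q(s)=0$, so all of $S$ lies in the quadric $\{Q=0\}\cap H$. The trace condition $\Tr(L_1^*(b)L_2^*(b))=0$ for all $b$ rewrites, via the adjoint identity, as the identical vanishing of the quadratic form $b\mapsto\Tr\bigl(b\,(L_1L_2^*)(b)\bigr)$. Polarizing this form forces $L_1L_2^*+(L_1L_2^*)^*=0$, that is, the operator identity $L_1L_2^*=L_2L_1^*$ (together with one residual linear condition coming from the diagonal terms). The second, quartic condition $Q(L_1^*(b)L_2^*(b))=0$ for all $b$ then becomes, after expanding $Q(x)=\sum_{i<j}x^{2^i+2^j}$, an overdetermined system of identities in the linearized coefficients of $L_1$ and $L_2$.

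The crux is to turn these constraints into a contradiction with $\ker(L_1^*)\cap\ker(L_2^*)=\{0\}$. My main line of attack is to manufacture a subspace of Kloosterman zeros of dimension larger than $d$ and invoke the sharp bound of Theorem~\ref{thm:kloozeros}. Fixing $0\ne c\in\ker(L_2^*)$ and letting $b$ run over $c+\ker(L_1^*)$ produces the subspace $L_1^*(c)\cdot L_2^*(\ker(L_1^*))\subseteq S$ of dimension $e_1$; this, however, only recovers the previous theorem. The extra input $L_1L_2^*=L_2L_1^*$ should let me glue such kernel-products together: I would study how the affine slices $\{L_1^*(b)=u\}$ behave under $b\mapsto L_1^*(b)L_2^*(b)$ and use self-adjointness to show that a suitable union of slices spans a subspace of $S$ of dimension exceeding $d$. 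Failing a purely structural argument, the fallback is a counting argument: the number of Kloosterman zeros is $O(2^{3n/4})$ by \cite{shpar}, so it suffices to bound $|S|$ below by $\omega(2^{3n/4})$, which I would approach by controlling the fibres of $b\mapsto L_1^*(b)L_2^*(b)$ (the fibre over $0$ has size $2^{e_1}+2^{e_2}-1$, and the nonzero fibres are governed by the affine images of $L_2^*(\ker(L_1^*))$). This would settle all large $n$, leaving the finitely many small values to be verified by the explicit computations already recorded in Table~\ref{t:zeros}.

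The principal obstacle is precisely that $S$ is the image of the \emph{nonlinear} map $b\mapsto L_1^*(b)L_2^*(b)$ rather than a subspace, so Theorem~\ref{thm:kloozeros} does not apply to $S$ directly and the naive kernel-product subspaces merely reproduce the partial result. Everything hinges on whether the relation $L_1L_2^*=L_2L_1^*$ and the quartic conditions from $Q$ contain enough rigidity either to enlarge these subspaces past dimension $d$ or to pin $|S|$ above the Kloosterman-zero count uniformly in $n$. If neither can be pushed through, one is driven toward a stronger $2$-adic approximation — a characterization of the elements with $K_n\equiv 0\pmod{64}$ — in order to over-determine the coefficient system and thereby force $L_1=0$ or $L_2=0$.
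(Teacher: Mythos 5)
You are attempting a statement that the paper itself leaves open: it appears as Conjecture~\ref{conj-open}, the paper proves only the partial result of Section~\ref{sec:last} (that $F$ is not a permutation when $\max(\dim\ker(L_1),\dim\ker(L_2))>d$), and a footnote records that a proof of the full conjecture was found only after acceptance and is not included. So there is no proof in the paper to compare against, and your proposal must stand on its own. As written it is not a proof: the parts you carry out (the reduction via Corollary~\ref{cor:inverse}, the swap normalization, the subspace $L_1^*(c)\cdot L_2^*(\ker(L_1^*))$ of dimension $e_1$, and the identity $L_1L_2^*=L_2L_1^*$ obtained by polarizing $\Tr(L_1^*(b)L_2^*(b))=0$, which is correct) reproduce exactly the paper's partial theorem, while the two ideas meant to go beyond it each have a concrete failure point. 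For the structural idea, the obstruction is that Kloosterman zeros do not form a subspace: Theorem~\ref{thm:kloozeros} requires a subspace \emph{all} of whose elements are Kloosterman zeros, but ``gluing slices'' only places a set of generators inside $S=\{L_1^*(b)L_2^*(b)\colon b\in\F_{2^n}\}$; the span of elements of $S$ need not lie in $S$, and you give no mechanism by which $L_1L_2^*=L_2L_1^*$ forces additive closure. You assert that self-adjointness ``should'' let a union of slices span a subspace of $S$ exceeding dimension $d$, but no such step is even sketched, and the paper's Table~\ref{t:zeros} shows the honest maximal dimensions are far below $d$, so any such construction would have to be very delicate.

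The counting fallback provably cannot work with the ingredients you cite. The fibres of $b\mapsto L_1^*(b)L_2^*(b)$ over nonzero values can have size on the order of $2^{n/2}$: for instance with $L_1^*=\mathrm{id}$ and $L_2^*(b)=b^{2^{n/2}}$ ($n$ even) the nonzero fibres have size $\gcd\bigl(2^{n/2}+1,2^n-1\bigr)=2^{n/2}+1$, and nothing in your hypotheses ($e_1\le d\approx n/2$) excludes comparably large fibres for non-bijective $L_1,L_2$. Hence the best general lower bound available is $|S|\gtrsim 2^{n/2}$, which is consistent with --- indeed matches --- the empirical count $\mathcal{Z}(n)\approx 2^{n/2}$ of Kloosterman zeros reported in Table~\ref{t:zeros}, and falls well short of the $\omega(2^{3n/4})$ you would need to contradict the $O(2^{3n/4})$ bound of \cite{shpar}. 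Your final paragraph concedes both gaps; the accurate assessment is that this is a research plan whose decisive step (either enlarging the totally isotropic structure past dimension $d$, or a substantially stronger $2$-adic characterization, e.g.\ modulo $64$) is missing, so the conjecture remains unproved by this route.
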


With Proposition~\ref{prop:start}, Conjecture~\ref{conj-open} implies the following (recall that the inverse mapping is an involution):

\begin{conjecture} 
	Let $n \geq 5$. Every function $F \colon \F_{2^n} \rightarrow \F_{2^n}$ that is CCZ equivalent to the inverse function is already EA equivalent to it. Moreover, if $F$ is additionally a permutation then $F$ is affine equivalent to the inverse function.
\end{conjecture}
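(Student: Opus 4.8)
The plan is to derive this conjecture directly from Conjecture~\ref{conj-open} together with Proposition~\ref{prop:start}(b), exploiting the fact that the inverse function is an involution. Write $\iota(x)=x^{-1}$ for the inverse function; then $\iota$ is a permutation of $\F_{2^n}$ with $\iota\circ\iota=\mathrm{id}$, so $\iota^{-1}=\iota$. I would apply Proposition~\ref{prop:start}(b) with the base function taken to be $\iota$.

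First I would verify that the hypothesis of Proposition~\ref{prop:start}(b) is satisfied for $\iota$. That hypothesis requires that no permutation of the form $L_1(\iota(x))+L_2(x)=L_1(x^{-1})+L_2(x)$ with nonzero linear $L_1,L_2$ exist, which is precisely the assertion of Conjecture~\ref{conj-open}, assumed here. One should note that the proof of part (b) internally invokes part (a), whose own hypothesis — that $x^{-1}+L(x)$ is never a permutation for nonzero linear $L$ — is guaranteed unconditionally by Theorem~\ref{thm:chin} for $n\geq 5$; hence this appeal to (a) inside (b) is legitimate and introduces no circularity.

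Granting the hypothesis, Proposition~\ref{prop:start}(b) states that every function CCZ-equivalent to $\iota$ is EA-equivalent to $\iota$ or to $\iota^{-1}$. Since $\iota^{-1}=\iota$, both alternatives coincide with the inverse function, which yields the first assertion. For the second assertion I would use the ``moreover'' clause of Proposition~\ref{prop:start}(b): any permutation that is CCZ-equivalent to $\iota$ is affine equivalent to $\iota$ or $\iota^{-1}$, and the involution identity again collapses this to affine equivalence with the inverse function.

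The main observation is that all the genuine difficulty has been isolated into Conjecture~\ref{conj-open}, so the remaining argument is a bookkeeping reduction rather than a substantive one. The only steps requiring care are matching Conjecture~\ref{conj-open} to the hypothesis of Proposition~\ref{prop:start}(b), confirming that the hypothesis of part (a) is already a theorem via Theorem~\ref{thm:chin}, and applying the identity $\iota^{-1}=\iota$; none of these presents a real obstacle, and I do not expect any hidden subtlety beyond keeping the two functions involved (the inverse $\iota$ and the function CCZ-equivalent to it) notationally distinct.
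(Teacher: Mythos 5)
Your proposal is correct and follows essentially the same route as the paper, which derives this conjecture from Conjecture~\ref{conj-open} via Proposition~\ref{prop:start}(b) together with the observation that the inverse mapping is an involution, so the alternatives $F$ EA-equivalent to $\iota$ or to $\iota^{-1}$ (respectively affine equivalent, in the permutation case) coincide. Your added check that the internal appeal to part (a) is non-circular — since its hypothesis for $x^{-1}+L(x)$ is already settled by Theorem~\ref{thm:chin} for $n\geq 5$ — is a sound piece of diligence that the paper leaves implicit.
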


\subsection*{Acknowledgements}
{We would like to thank the anonymous referees for their careful reading of our paper and
their comments, which helped us to improve its presentation. We especially thank a referee who suggested 
an improvement in Theorem~\ref{thm:kloozeros} for the case $n$ even and provided background information on Kloosterman sums that helped us to improve the tutorial value of our paper. Remark \ref{rem:bound} is based 
on  comments from her/his report.
We thank Petr Lisonek for interesting discussions on Kloosterman zeroes and sending us the reference \cite{mth-pl}, which we used to compute Table~\ref{t:zeros}.}

This work was supported by the GAČR Grant 18-19087S -301-13/201843
\\

\bibliographystyle{acm}

\bibliography{waifi}
\end{document}